\newtheorem{theorem}{Theorem}
\newtheorem{claim}[theorem]{Claim}
\newtheorem{corollary}[theorem]{Corollary}
\newtheorem{definition}[theorem]{Definition}
\newtheorem{example}{Example}
\newtheorem{conjecture}{Conjecture}
\newcommand{\floor}[1]{\left\lfloor{#1}\right\rfloor}
\def\h{\mathcal H}
\def\P{\mathcal P}
\def\F{\mathcal F}
\newcommand{\abs}[1]{\left\lvert{#1}\right\rvert}
\title{Pósa-type results for Berge hypergraphs}
\author{
Nika Salia\thanks{King Fahd University of Petroleum and Minerals, Dhahran, Saudi Arabia, email: \texttt{salianika@gmail.com} } 
}
\begin{document}
\maketitle
\begin{abstract}
A Berge cycle of length $k$ in a hypergraph $\mathcal H$ is a sequence of distinct vertices and hyperedges $v_1,h_1,v_2,h_2,\dots,v_{k},h_k$  such that $v_{i},v_{i+1}\in h_i$ for all $i\in[k]$, indices taken modulo $k$. F\"uredi, Kostochka, and Luo recently gave sharp Dirac-type minimum degree conditions that force non-uniform hypergraphs to have Hamiltonian Berge cycles. 
We give a sharp Pósa-type lower bound for $r$-uniform and non-uniform hypergraphs that force Hamiltonian Berge cycles.
\end{abstract}

\section{Introduction}
The study of Hamiltonian cycles is one of the essential topics of Graph Theory. In the present paper, we study sufficient degree conditions for a hypergraph to be Hamiltonian, in both uniform and non-uniform cases. We call a hypergraph Hamiltonian if there is a Berge cycle containing all of the vertices of the hypergraph as defining vertices. Note that it is natural to follow the definition of Berge for cycles in hypergraphs since there is a one-to-one correspondence between Berge cycles of the hypergraph and cycles in the incidence bipartite graph of the hypergraph. For a hypergraph $\h$ consider the incidence bipartite graph $G(A, B)$, where the vertices in $A$ represent vertices of $\h$ and the vertices in $B$ represent hyperedges of $\h$. A vertex $a \in A$ is adjacent with a vertex $b \in B$ in $G$ if and only if the vertex in $\h$ corresponding to $a$ is contained in the hyperedge corresponding to $b$ in $\h$.  There is a one-to-one correspondence between cycles in $G$ and Berge cycles of $\h$. The corresponding cycle in $G$ of a Hamiltonian Berge cycle in $\h$ is a cycle containing all vertices of the set $A$.
In~\cite{jackson1981cycles, jackson1985long, kostochka2020super, kostochka2020conditions,kostochka2022longest} is related work on cycles covering color classes in bipartite graphs.
In the coming subsections, we present the motivation for this work and introduce some necessary definitions and notions. 

\subsection{Hamiltonian cycles in graphs}

The Hamiltonicity of graphs is a well-studied problem. In this subsection, we only state those results which are the direct motivation of this work.  For the recent developments in this topic, we refer the reader to the following survey papers  \cite{gould2003advances, gould2014recent}. 

For a graph $G$, a cycle containing every vertex of $G$ is called a \emph{Hamiltonian cycle}.  Graphs containing at least one Hamiltonian cycle are called \emph{Hamiltonian}. We also call a path \emph{Hamiltonian path} if it contains every vertex of the graph.  
Dirac~\cite{dirac1952some} in 1952 proved an important sufficient condition for a graph to be Hamiltonian. They showed that every graph $G$ on $n$ vertices with $n\geq 3$ and minimum degree $\delta(G)$ at least $\frac{n}{2}$ is Hamiltonian. This theorem is sharp in the sense that one can not replace $\frac{n}{2}$ with anything less, for example with $\floor{\frac{n}{2}}$. In 1962 Pósa proved a strengthening of this theorem. 
\begin{theorem}[Pósa~\cite{posa1962theorem}]\label{thm:posa}
Let $G$ be an $n$ vertex graph. Let $n\geq 3$ and the degree sequence of $G$ be $d_1\leq d_2\leq \dots \leq d_n$. 
If for all  $k<\frac{n}{2}$ the inequality $k<d_k$ holds then $G$ is Hamiltonian.
\end{theorem}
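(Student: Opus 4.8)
The plan is to argue by contradiction using an edge-maximal counterexample together with a rotation argument on a Hamiltonian path, in the spirit of the classical Ore/Bondy--Chvátal approach, but extracting a whole \emph{family} of low-degree vertices rather than a single pair, so that the full degree-sequence hypothesis can actually be invoked.

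First I would suppose $G$ satisfies the hypothesis but is not Hamiltonian, and pass to an edge-maximal non-Hamiltonian graph $H$ on the same vertex set, obtained by adding edges one at a time. Since adding edges never decreases any degree, $H$ still satisfies the hypothesis that $k < d_k$ for all $k < \frac{n}{2}$. As $K_n$ is Hamiltonian for $n \ge 3$, the graph $H$ is not complete, so it has a pair of non-adjacent vertices; among all such pairs I would choose $\{u,v\}$ maximizing $\deg(u)+\deg(v)$, and write $d := \deg(u) \le \deg(v)$. By maximality, $H + uv$ is Hamiltonian and every Hamiltonian cycle of it must use the edge $uv$, so $H$ contains a Hamiltonian path $u = x_1, x_2, \dots, x_n = v$.

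The standard rotation observation is then that the index sets $S = \{\, i : u x_{i+1} \in E(H)\,\}$ and $T = \{\, i : v x_i \in E(H)\,\}$ are disjoint: a common index $i$ would give the Hamiltonian cycle $x_1 \cdots x_i x_n x_{n-1} \cdots x_{i+1} x_1$, a contradiction. Since both sets avoid the index $n$, we get $\deg(u)+\deg(v) = |S|+|T| \le n-1$, and hence $d = \deg(u) < \frac{n}{2}$.

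The crucial and least routine step is to upgrade the existence of one low-degree vertex into a statement about \emph{many} of them, which is exactly what the maximization of the degree sum buys us. Every vertex $x_i$ with $i \in S$ is non-adjacent to $v$ (because $S \cap T = \emptyset$ and $i \le n-1$ forces $x_i \ne v$), so the maximal choice of $\{u,v\}$ yields $\deg(x_i)+\deg(v) \le \deg(u)+\deg(v)$, i.e. $\deg(x_i) \le d$. This produces $|S| = d$ distinct vertices of degree at most $d$, so the $d$-th smallest degree satisfies $d_d \le d$. But $d < \frac{n}{2}$, so the hypothesis applied with $k = d$ demands $d_d > d$, the desired contradiction. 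I expect the main obstacle to be arranging the maximization so that it simultaneously drives the rotation (via $S \cap T = \emptyset$) and the degree bound $\deg(x_i) \le d$ for \emph{all} of $S$: obtaining a single low-degree vertex is routine, but the Pósa condition is only violated once one exhibits $d$ of them at once.
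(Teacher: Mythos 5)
Your proof is correct and complete: the passage to an edge-maximal non-Hamiltonian supergraph, the rotation argument showing $S\cap T=\emptyset$ with $S,T\subseteq\{1,\dots,n-1\}$, and the use of the maximal non-adjacent pair to exhibit $d=\deg(u)<\frac{n}{2}$ vertices of degree at most $d$ (contradicting $d_d>d$) is exactly the classical argument. The paper itself gives no proof of this statement --- it is quoted from P\'osa's 1962 paper --- but your strategy (maximal counterexample, Hamiltonian path, rotations, and a family of low-degree terminal vertices) is precisely the template the paper later adapts to Berge paths in its own Theorems~\ref{Theorem_Posa_r_Uniform} and~\ref{Theorem_Posa_non_Uniform}, so nothing further is needed.
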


Several theorems give sufficient conditions for some classes of graphs to be Hamiltonian for a reference see \cite{bondy1969properties,tutte1956theorem}.  In 1972, Chvátal found the necessary and sufficient condition for an integer sequence to be Hamiltonian.
We say an \emph{integer sequence} $d_1\leq d_2\leq \dots \leq d_n$ is \emph{Hamiltonian} if every graph with $n$ vertices and the degree sequence pointwise greater than the integer sequence is Hamiltonian.  
 
 \begin{theorem}[Chvátal~\cite{chvatal1972hamilton}]\label{thm:chvatal}
 An integer sequence $d_1\leq d_2\leq \dots \leq d_n$, such that $n\geq 3$ is Hamiltonian if and only if the following holds for every $k<\frac{n}{2}$
 \[
 d_k\leq k \Rightarrow d_{n-k}\geq n-k.
 \]
 \end{theorem}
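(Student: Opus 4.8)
The plan is to prove both directions of the equivalence. For \textbf{necessity} I would argue the contrapositive: if the arithmetic condition fails, I exhibit a non-Hamiltonian graph whose degree sequence dominates $d_1\le\dots\le d_n$ pointwise, witnessing that the sequence is not Hamiltonian. For \textbf{sufficiency}, assuming the condition holds, I would take an arbitrary graph $G$ whose degree sequence dominates $(d_i)$ and show $G$ is Hamiltonian, by passing to an edge-maximal non-Hamiltonian graph and deriving a contradiction.

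\textbf{Necessity.} Suppose the condition fails for some $k<n/2$, so $d_k\le k$ while $d_{n-k}\le n-k-1$. I would build the graph $G_{n,k}$ on $V_1\cup V_2\cup V_3$ with $|V_1|=|V_3|=k$ and $|V_2|=n-2k$, where $V_1$ is independent, $V_2\cup V_3$ spans a clique, $V_1$ is completely joined to $V_3$, and there are no $V_1$--$V_2$ edges. A direct count gives degree $k$ on $V_1$, degree $n-k-1$ on $V_2$, and degree $n-1$ on $V_3$, so the sorted degree sequence is exactly $k$ copies of $k$, then $n-2k$ copies of $n-k-1$, then $k$ copies of $n-1$; comparing termwise with $d_k\le k$ and $d_{n-k}\le n-k-1$ shows this sequence dominates $(d_i)$. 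Finally $G_{n,k}$ is not Hamiltonian because deleting the $k$ vertices of $V_3$ leaves the $k$ isolated vertices of $V_1$ together with the clique on $V_2$, i.e. $k+1>k$ components, which no Hamiltonian graph can exhibit after deleting $k$ vertices.

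\textbf{Sufficiency.} First I would check that the condition is preserved under termwise increase of the sequence: if $e_i\ge d_i$ for all $i$ and $e_k\le k$, then $d_k\le k$, so $d_{n-k}\ge n-k$ and hence $e_{n-k}\ge n-k$. This lets me add edges freely and assume $G$ is an edge-maximal non-Hamiltonian graph still satisfying the condition. The crux is the standard rotation lemma: for any two non-adjacent vertices $u,v$ in such a $G$, maximality forces a Hamiltonian path $u=x_1,\dots,x_n=v$, and if some index $i$ satisfied both $ux_{i+1}\in E$ and $vx_i\in E$ one would close up a Hamiltonian cycle; the resulting disjointness of the two index sets yields $d(u)+d(v)\le n-1$. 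I expect this crossing argument to be the main obstacle, since it is where the genuine combinatorial idea lives.

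To finish, I would pick a non-adjacent pair $u,v$ maximizing $d(u)+d(v)$ with $k:=d(u)\le d(v)$; the lemma gives $2k\le n-1$, so $k\le (n-1)/2<n/2$ and the hypothesis applies at $k$. By maximality every vertex non-adjacent to $v$ has degree at most $d(u)=k$, and there are at least $n-1-d(v)\ge n-1-(n-1-k)=k$ such vertices, so $d_k\le k$. The hypothesis then forces $d_{n-k}\ge n-k$, i.e. at least $k+1$ vertices of degree at least $n-k$. Each such vertex $w$ satisfies $d(u)+d(w)\ge k+(n-k)=n>n-1$, so by the lemma $u$ must be adjacent to every one of these $k+1$ vertices (none of which is $u$, as their degree exceeds $k$); but then $d(u)\ge k+1$, contradicting $d(u)=k$. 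Hence no edge-maximal non-Hamiltonian graph satisfies the condition, so every graph dominating $(d_i)$ is Hamiltonian.
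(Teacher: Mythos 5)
Your proposal is correct: the $C_{n,k}$-type construction for necessity and the edge-maximal counterexample with the crossing lemma and the extremal non-adjacent pair for sufficiency together constitute the standard (indeed, Chvátal's original) proof, and all the counting steps check out. Note, however, that the paper does not prove this statement at all --- it is quoted as a known classical result with a citation to Chvátal's 1972 paper --- so there is no in-paper argument to compare against; your write-up simply supplies the standard external proof.
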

 
 In the following subsection, we introduce some notions and results for Hamiltonian Berge hypergraphs.

\subsection{Hamiltonicity for hypergraphs and main results} 

To describe the problem and existing results,  we need to introduce some standard notions. 
A hypergraph $\mathcal{H}$ is defined by a pair $(V(\mathcal{H}), E(\mathcal{H}))$, where $V(\mathcal{H})$ denotes the set of vertices and $E(\mathcal{H})$ denotes the set of hyperedges. Furthermore, $E(\mathcal{H}) \subseteq \mathcal{P}(V(\mathcal{H}))$, with $\mathcal{P}(V(\mathcal{H}))$ representing the power set of $V(\mathcal{H})$.
 We say $\h$ is $r$-uniform if every hyperedge has size~$r$. For a vertex $v\in V(\h)$, the degree of a vertex is the number of hyperedges incident with it and is denoted by $d_{\h}(v)$. If the host hypergraph is clear from the context we will use $d(v)$ instead of $d_{\h}(v)$. 
We define the neighbourhood of a vertex $v$ as $N_{\h}(v)=\{u\in V(\h)\setminus \{v\}:\{u,v\}\subseteq h, h\in \h \}$.
The closed neighborhood of $v$ is defined as $N_{\h}[v]$, where $N_{\h}[v]=N_{\h}(v)\cup \{v\}$. 
For a vertex set $S$, we denote $N_{\h}(S):=\cup_{v \in S}N(v)$ and $N_{\h}[S]:=\cup_{v \in S}N[v]$ .
For a hypergraph $\h$ and sub-hypergraph $\h'$ the hypergraph on the same vertex set as $\h$ and with hyperedges $E(\h)\setminus E(\h')$ is denoted by $\h\setminus \h'$. For a set $S$ and an integer $r$ let us denote the set of all subsets of $S$ of size $r$ by $\binom{S}{r}$. Let us introduce the notions of  Berge paths and Berge cycles~\cite{berge1973graphs}.

\begin{definition}
A \emph{Berge path} of length $t$  is an alternating sequence of $t+1$  $(t)$ distinct vertices (hyperedges) of the hypergraph,  $v_1, e_1, v_2, e_2, v_3, \dots, e_t, v_{t+1}$ such that $v_{i},v_{i+1} \in e_i$, for $i \in [t]$. The vertices $v_1, v_2, \dots, v_{t+1}$ are called \emph{defining vertices} and the hyperedges $e_1,e_2,\dots,e_t$ are called \emph{defining hyperedges} of the Berge path. 

Similarly, a \emph{Berge cycle} of length $t$ is an alternating sequence of $t$ distinct vertices and hyperedges of the hypergraph,  $v_1, e_1, v_2, e_2, v_3, \dots, v_t, e_t$, such that $v_{i},v_{i+1} \in e_i$, for $i\in [t]$, where indices are taken modulo $t$. The vertices $v_1, v_2, \dots, v_{t}$ are called \emph{defining vertices} and the hyperedges $e_1,e_2,\dots,e_t$ are called \emph{defining hyperedges} of the Berge cycle. 
\end{definition}

Long Berge cycles are well-studied for hypergraphs. Turán-type questions for uniform hypergraphs without long Berge cycles are settled in \cite{ergemlidze2020avoiding, furedi2019avoiding, gyHori2021structure, kostochka2020r}. Bermond, Germa, Heydemann, and Sotteau \cite{bermond1978hypergraphes}   found a Dirac-type condition forcing long Berge cycles for uniform hypergraphs.  Recently Coulson and  Perarnau~\cite{coulson2020rainbow}  found Dirac-type condition forcing Berge Hamiltonicity in hypergraphs.   
Füredi, Kostochka, and Luo~\cite{furedi2020berge} generalized Dirac's theorem for non-uniform hypergraphs. For uniform linear hypergraphs, Jiang and  Ma  \cite{jiang2018cycles}  settled a conjecture of Verstraëte, by finding asymptotic minimum degree condition necessary for the existence of Berge cycles of $k$ consecutive lengths.  For the study of asymptotic minimum degree thresholds that force matching see the following work~\cite{bowtell2022degree, han2009perfect}.
 Katona and Kierstead~\cite{katona1999hamiltonian} introduced an alternative definition of Hamiltonian cycles in hypergraphs, a notion that has garnered considerable attention over recent decades. A seminal contribution to this area was made by Rödl, Ruciński, and Szemerédi~\cite{rodl2008approximate}, who established a Dirac-type condition for Hamiltonian cycles under this definition see also work by Schülke~\cite{schulke2023pair},  which presents a Pósa-type condition for Hamiltonicity.
In the subsequent part of this work, we adopt the term ''Hamiltonian`` to specifically refer to the concept of Berge Hamiltonian cycles in hypergraphs.
Here we state the theorem of   Füredi, Kostochka, and Luo.

\begin{theorem}[Füredi, Kostochka, Luo~\cite{furedi2020berge}] \label{thm:F_K_L_non_unif_Hyp}
Let $n\geq 15$ and let $\h$ be an $n$-vertex hypergraph such that $\delta(\h)\geq 2^{\frac{n-1}{2}}+1$ if $n$ is odd and $\delta(\h)\geq $2 $^{\frac{n-2}{2}}+2$ if $n$ is even. Then H contains a Berge Hamiltonian cycle.
\end{theorem}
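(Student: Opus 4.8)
The plan is to pass to the incidence bipartite graph $G(A,B)$ described above, so that producing a Hamiltonian Berge cycle in $\h$ amounts to producing a cyclic ordering $v_1,v_2,\dots,v_n$ of $V(\h)$ together with \emph{distinct} hyperedges $e_1,\dots,e_n$ satisfying $\{v_i,v_{i+1}\}\subseteq e_i$ (indices modulo $n$). The engine of the whole argument is the following counting observation, which is the only place simplicity and the exponential hypothesis are used. For any vertex $v$ and any vertex set $W\ni v$, the hyperedges $e$ with $v\in e\subseteq W$ are pairwise distinct subsets of $W$ containing $v$, so
\[
\bigl|\{e\in E(\h): v\in e\subseteq W\}\bigr|\le 2^{|W|-1}.
\]
Hence if $|W|\le \ceil{n/2}$ then $2^{|W|-1}\le 2^{\ceil{n/2}-1}<\delta(\h)$, so every vertex owns a hyperedge reaching outside $W$; and whenever $|W|\le\ceil{n/2}-1$ the slack $\delta(\h)-2^{|W|-1}\ge 2^{\ceil{n/2}-2}\gg n$ survives even after forbidding any family $\mathcal F$ of fewer than $n$ already-used hyperedges, because $n\ge 15$. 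Two consequences are immediate: every vertex can \emph{escape} any half-sized forbidden region through a \emph{fresh} hyperedge, and, taking $W=N_\h[v]$, every vertex satisfies $2^{|N_\h(v)|}\ge\delta(\h)>2^{\ceil{n/2}-1}$, so $|N_\h(v)|\ge\ceil{n/2}$. In particular the \emph{shadow graph} $\Gamma$ on $V(\h)$ (join $u,w$ when some hyperedge contains both) has minimum degree at least $\ceil{n/2}\ge n/2$, so by Theorem~\ref{thm:posa} it is Hamiltonian; this already pins the threshold at $2^{(n-1)/2}$.

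What remains, and what the bulk of the work must address, is upgrading a cyclic order in $\Gamma$ to one admitting \emph{distinct} defining hyperedges. I would do this by a rotation--extension argument run directly on Berge paths. Start with a longest Berge path $P=v_1e_1v_2\cdots e_{t-1}v_t$ and suppose $t<n$. If the covered set has size at most $\ceil{n/2}-1$, the escape lemma applied at $v_1$ (forbidding the $t-1<n$ hyperedges of $P$) extends $P$ at once, so we may assume $t\ge\ceil{n/2}$. In that regime I run Pósa rotations from $v_t$: whenever $v_t$ has a fresh hyperedge meeting an interior $v_i$, reversing the tail produces a new Berge path on the same vertex set with a new endpoint, and the fresh-hyperedge requirement is never binding since each vertex has $\gg n$ hyperedges. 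Letting $S$ be the set of attainable endpoints, non-extendability forces every escape from every $s\in S$ to land inside $P$ adjacent to $S$, which by the counting bound confines a half-sized set and thereby drives $|S|$ past $\tfrac{n-1}{2}$.

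To finish I would close a long Berge cycle and then absorb the last vertices. Since the endpoint can be rotated (from each end) to a set of size exceeding $n/2$, a standard crossing argument yields a fresh hyperedge joining two rotation images, producing a Berge cycle $C$ on the vertex set of $P$. If $C$ still misses a vertex $u$, I splice $u$ in: because $|N_\h(u)|\ge\ceil{n/2}$ exceeds half of the at most $n-1$ vertices of $C$, two neighbors of $u$ are consecutive on $C$, say at the link $v_ie_iv_{i+1}$; replacing $e_i$ by two fresh hyperedges $f\ni\{v_i,u\}$ and $g\ni\{u,v_{i+1}\}$ (available since $u$ has $\gg n$ hyperedges and only fewer than $n$ are in use) lengthens $C$ by one vertex. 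Iterating absorbs every vertex and yields the Hamiltonian Berge cycle.

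The main obstacle is not the rotations but keeping the defining hyperedges \emph{distinct} throughout, that is, honoring simplicity; this is exactly where the purely graph-theoretic Dirac conclusion about $\Gamma$ is insufficient, and where the exponential surplus $\delta(\h)-2^{|W|-1}$ must be spent to guarantee a fresh hyperedge at each step while the forbidden region is still only half the vertex set. The second delicate point is the exact threshold: the bound $2^{|W|-1}$ is tight precisely when $|W|=\ceil{n/2}$, where the remaining slack is only $\delta(\h)-2^{\ceil{n/2}-1}$, namely the additive constant in the hypothesis; tracking the parity of $n$ through $\ceil{n/2}$ is what separates the odd case $\delta\ge 2^{(n-1)/2}+1$ from the even case $\delta\ge 2^{(n-2)/2}+2$ and accounts for the $+1$ versus $+2$, while $n\ge 15$ is exactly what makes the interior slack $2^{\ceil{n/2}-2}$ dominate the $|\mathcal F|<n$ term in every application.
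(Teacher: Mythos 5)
Your opening counting lemma is correct and is indeed the engine of this entire area: in a simple hypergraph the hyperedges $e$ with $v\in e\subseteq W$ are pairwise distinct subsets of $W$ containing $v$, so there are at most $2^{\abs{W}-1}$ of them, and taking $W=N_{\h}[v]$ gives $\abs{N_{\h}(v)}\geq\ceil{n/2}$. Your overall plan (longest Berge path, Pósa rotations, closing the path via disjointness of shifted endpoint-neighbourhoods) is also the same skeleton this paper uses in Section~2.2 for the stronger Theorem~\ref{Theorem_Posa_non_Uniform}; note that the statement you were asked to prove is only quoted from Füredi--Kostochka--Luo and is not reproved here. The genuine gap sits exactly at the point you flag as ``delicate'' and then defer. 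Your freshness argument (``the slack $\delta(\h)-2^{\abs{W}-1}\gg n$ survives after forbidding fewer than $n$ used hyperedges'') is valid only while $\abs{W}\leq\ceil{n/2}-1$. But every step that matters --- rotating through a vertex of $N_{\h}(v_1)$ once $\abs{N_{\h}(v_1)}=\ceil{n/2}$ exactly, escaping a path that already covers at least $\ceil{n/2}$ vertices, and the final crossing step --- operates at $\abs{W}=\ceil{n/2}$, where the surplus over $2^{\abs{W}-1}$ is only the additive constant $1$ or $2$ and cannot absorb the up to $n-1$ defining hyperedges already in use. Handling this boundary case is the actual content of the theorem; it is what Claim~\ref{Claim:basic}, the auxiliary hypergraph $\h_1=(\h\setminus P)\cup\{h_1,\dots,h_{x-1}\}$, and Claims~\ref{claim:neighborhood_size_of_v_1_NON-uniform}--\ref{Claim:T_1_size_NON-uniform} are for: one must show that certain \emph{defining} hyperedges of $P$ can themselves be recycled as rotation edges, since fresh ones need not exist. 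Your sketch does not resolve this, and the claim that non-extendability ``drives $\abs{S}$ past $\tfrac{n-1}{2}$'' is asserted rather than derived.

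The absorption step at the end is moreover incorrect as stated. If the cycle $C$ misses $u$ and $v_i,v_{i+1}$ are consecutive neighbours of $u$ on $C$, you need fresh hyperedges $f\supseteq\{u,v_i\}$ and $g\supseteq\{u,v_{i+1}\}$; but the number of hyperedges containing the \emph{specific pair} $\{u,v_i\}$ can be as small as one, and that single hyperedge may already serve as a defining hyperedge of $C$ elsewhere. The observation that $u$ lies in exponentially many hyperedges while fewer than $n$ are in use does not localize to a pair, so the splice can fail. This is precisely why the paper (and Füredi--Kostochka--Luo) never build a shorter cycle and then absorb: by maximality they first arrange that the longest Berge path is already Hamiltonian as a path, and the only pair-specific hyperedge ever required is produced by the counting contradiction $N_{\h_1}[S_1]^-\cap N_{\h_n}[S_n]=\emptyset$ together with $\abs{N_{\h_1}[S_1]^-}+\abs{N_{\h_n}[S_n]}\geq n+1$. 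To repair your argument you would need to replace the absorption phase by that closing argument and supply the Berge analogue of Pósa's rotation lemma with explicit bookkeeping of which hyperedges become non-defining after each rotation.
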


For $r$-uniform hypergraphs Kostochka, Luo, and  McCourt~\cite{kostochka2021dirac} proved exact Dirac-type bounds for Hamiltonian Berge cycles in $r$-uniform $n$-vertex hypergraphs for all $3\leq r < n$, their bounds are different for $r<n/2$ and $r\geq n/2$.  
Let us note that, the proof techniques used in works~\cite{gerbner2024stability, gyHori2019connected} also provide the same result for $r<n/2$.

We say an integer sequence $(d_1,d_2,\ldots,d_n)$  is  $r$-Hamiltonian if every $r$-uniform hypergraph with the degree sequence pointwise greater than the integer sequence is Hamiltonian. 
We say an integer sequence $(d_1,d_2,\ldots,d_n)$  is  $\mathbb N$-Hamiltonian if every non-uniform hypergraph with the degree sequence pointwise greater than the integer sequence is Hamiltonian.

\begin{theorem}\label{Theorem_Posa_r_Uniform}
An integer sequence $(d_1,d_2,\ldots,d_n)$ such that $d_1\leq d_2\leq \dots\leq d_n$, $n>2r$ and $r\geq 3$ is  $r$-Hamiltonian if the following conditions hold
\begin{equation}\label{Condition:Posa1}
   d_i>i \mbox{ for } 1 \leq i < r,
\end{equation}

\begin{equation}\label{Condition:Posa2}
  d_i> \binom{i}{r-1}  \mbox{ for } r \leq i \leq \floor{\frac{n-1}{2}},
\end{equation}

\begin{equation}\label{Condition:Posa3}
d_{\frac{n-2}{2}}> \binom{\frac{n-2}{2}}{r-1} +1  \mbox{ if $n$ is even.} 
\end{equation}

\end{theorem}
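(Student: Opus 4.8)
The plan is to pass to the incidence bipartite graph, where a Berge Hamiltonian cycle of $\h$ is exactly a cycle meeting every vertex of one colour class, and to run a rotation--extension argument in the spirit of Pósa's proof of Theorem~\ref{thm:posa}; the degree hypotheses enter through a dictionary between hyperdegrees and neighbourhood sizes.

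The elementary fact driving this dictionary is that if $|N_{\h}(v)|\le s$ then every hyperedge through $v$ is $v$ together with an $(r-1)$-subset of $N_{\h}(v)$, whence $d_{\h}(v)\le\binom{s}{r-1}$ (with $\binom{s}{r-1}=0$ for $s<r-1$); contrapositively, $d_{\h}(v)>\binom{i}{r-1}$ forces $|N_{\h}(v)|\ge i+1$. Let $\h$ be any $r$-uniform $n$-vertex hypergraph whose sorted degree sequence dominates $(d_1,\dots,d_n)$ pointwise. For $1\le i<r$ we have $\binom{i}{r-1}\le i$, so \eqref{Condition:Posa1} gives $d_i>\binom{i}{r-1}$; together with \eqref{Condition:Posa2} this yields $d_i>\binom{i}{r-1}$ for the whole range $1\le i\le\floor{\frac{n-1}{2}}$, the hypothesis $n>2r$ ensuring the two ranges abut. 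Hence at most $i-1$ vertices of $\h$ can have $|N_{\h}(v)|\le i$, since each such vertex has degree at most $\binom{i}{r-1}$, strictly below the $i$-th smallest degree. Writing $\Gamma$ for the graph on $V(\h)$ with $uv\in E(\Gamma)$ iff $\{u,v\}$ lies in a common hyperedge, this says the $i$-th smallest degree $t_i$ of $\Gamma$ satisfies $t_i\ge i+1$ for all $i\le\floor{\frac{n-1}{2}}$. Thus $\Gamma$ meets the hypothesis of Theorem~\ref{thm:posa}, and the conditions make the implication in Chvátal's Theorem~\ref{thm:chvatal} vacuous; in particular $\Gamma$ is Hamiltonian.

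Hamiltonicity of $\Gamma$ is not enough on its own: a Hamiltonian cycle of $\Gamma$ need not lift to a Berge cycle, because $r-1$ consecutive edges of it may be coverable only by a single common hyperedge, leaving no system of distinct defining hyperedges. To sidestep this I would work in the incidence bipartite graph $G(A,B)$, in which a Berge Hamiltonian cycle of $\h$ is precisely a cycle through all of $A$. The decisive gain is that the defining hyperedges of such a cycle are distinct $B$-vertices automatically, so the distinctness constraint is built into the model and never has to be re-established. Here $\deg_G(a)=d_{\h}(a)$ for $a\in A$, $\deg_G(b)=r$ for $b\in B$, and the set of $A$-vertices at distance two from $a$ is exactly $N_{\h}(a)$, whose size is controlled by the previous paragraph. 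I would then take a path of $G$ with both endpoints in $A$ covering as many $A$-vertices as possible and run rotations: a rotation re-routes the path through a $B$-vertex already on it, so it re-uses the same set of hyperedges and preserves distinctness for free, while the set of endpoints it produces is governed by $N_{\h}$ of the current endpoint. Feeding the bounds $t_i\ge i+1$ into the usual Pósa estimate on reachable endpoints yields either a longer path, contradicting maximality, or a cycle covering the same $A$-vertices; and the same neighbourhood bounds make $\Gamma$ connected enough that a non-spanning cycle can be reopened into a longer path. This forces a cycle of $G$ through all of $A$, i.e.\ a Berge Hamiltonian cycle. Condition \eqref{Condition:Posa1}, beyond the dictionary, also guarantees that the low-rank vertices sit in enough hyperedges to keep rotations and extensions available.

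I expect the delicate point to be the boundary analysis for even $n$, which is exactly the purpose of \eqref{Condition:Posa3}. For even $n$ the bare bound $t_i\ge i+1$ only barely rules out the near-extremal configuration in which $V(\h)$ splits into two halves of size $\frac n2$ joined too sparsely across the middle, mirroring the way Chvátal's boundary term $d_{n-k}\ge n-k$ is needed at $k=\frac n2-1$. The strengthening $d_{\frac{n-2}{2}}>\binom{\frac{n-2}{2}}{r-1}+1$ provides one hyperedge of slack beyond the bare Pósa threshold at the critical middle rank $\frac n2-1=\floor{\frac{n-1}{2}}$, and this surplus is precisely what is needed to avoid a hyperedge collision when closing the final cycle. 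Tracking the reachable-endpoint sets through the rotations and verifying that this single extra hyperedge defeats the even extremal example is the technical heart of the argument.
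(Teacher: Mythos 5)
Your overall strategy --- rotation--extension on a maximal Berge path viewed in the incidence bipartite graph --- is the same one the paper uses (its Permutations~1--3 are exactly such rotations), and your dictionary $d(v)>\binom{i}{r-1}\Rightarrow|N_{\h}(v)|\ge i+1$ is also the mechanism by which the degree hypotheses enter there. But the proposal has a genuine gap at its central step. You assert that after a rotation ``the set of endpoints it produces is governed by $N_{\h}$ of the current endpoint,'' and you feed the shadow-graph bounds $t_i\ge i+1$ into ``the usual Pósa estimate.'' This is precisely what fails for Berge paths: a rotation at $v_1$ through the pair $\{v_1,v_{i+1}\}$ is available only if that pair lies in a hyperedge that is \emph{not} already serving as a defining hyperedge of the path (or is the defining hyperedge in the special position of Permutation~1). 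An adjacency of $v_1$ realized only by a defining hyperedge $h_j$ sitting elsewhere on the path yields no rotation. Consequently the reachable endpoint set $T_1$ is controlled by $|N_{\h\setminus P}(v_1)|$ together with the number $k$ of defining hyperedges through $v_1$, not by $|N_{\h}(v_1)|$, and the entire technical content of the paper (Claims~\ref{Claim_Size_Degree_v}--\ref{Claim:T_1_size_3-uniform}) is devoted to showing that these two quantities still force at least $t+1$ reachable endpoints, where $\binom{t}{r-1}<d(v_1)\le\binom{t+1}{r-1}$, with a separate delicate treatment of $r=3$. Your reduction to Hamiltonicity of the shadow graph $\Gamma$, which you correctly flag as insufficient, does no work toward this.

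The endgame is also missing. Since one may assume $\h$ is maximal, the path is already spanning, so there is no longer path to extend to and no non-spanning cycle to reopen; the contradiction must come from a counting argument. The paper shows both endpoints satisfy $d(v_1),d(v_n)>\binom{\floor{(n-1)/2}}{r-1}+\mathbbm{1}_{2\mid n}$, hence have large sets $N_{\h_1}[S_1]$ and $N_{\h_n}[S_n]$, proves that the shifted sets $N_{\h_1}[S_1]^-$ and $N_{\h_n}[S_n]$ are disjoint (otherwise a Hamiltonian Berge cycle closes up), and derives $n+1\le n$. Your sketch never identifies this disjointness-plus-cardinality step, and your discussion of Condition~\ref{Condition:Posa3} explicitly defers the even-$n$ analysis rather than carrying it out; in the paper that extra $+1$ is used to guarantee a second admissible defining hyperedge $h_y$ in Claim~\ref{Claim:T_1_size}, so that the exceptional vertices $u$ and $u'$ produced at the two ends of the path can be chosen to be non-conflicting in the final count. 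In short, the proposal points in the right direction, but the two places where the theorem is actually hard --- counting reachable endpoints when adjacencies are consumed by defining hyperedges, and closing the cardinality count at the end --- are not supplied.
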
 
One may interpret  Theorem~\ref{Theorem_Posa_r_Uniform} as an analog of Theorem~\ref{thm:posa} for $r$-uniform hypergraphs. Besides we would like to prove the analog of Theorem~\ref{thm:posa} for non-uniform hypergraphs as a strengthening of Theorem~\ref{thm:F_K_L_non_unif_Hyp}.

\begin{theorem}\label{Theorem_Posa_non_Uniform}
An integer sequence $(d_1,d_2,\ldots,d_n)$ such that $d_1\leq d_2\leq \dots\leq d_n$ and $n>40$ is $\mathbb N$-Hamiltonian if the following conditions hold

\begin{equation}\label{Condition:PosaNON-uniform_1}
  d_i> 2^i  \mbox{ for } 1 \leq i \leq \floor{\frac{n-1}{2}},
\end{equation}

\begin{equation}\label{Condition:PosaNON-uniform_2}
d_{\frac{n-2}{2}}> 2^{\frac{n-2}{2}}+1  \mbox{ if $n$ is even.} 
\end{equation}

\end{theorem}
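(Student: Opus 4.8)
The plan is to argue by contradiction: assume $\h$ is a simple hypergraph on $n$ vertices whose degree sequence dominates $(d_1,\dots,d_n)$ pointwise but which carries no Hamiltonian Berge cycle, and to derive a contradiction via a rotation--extension argument.

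\emph{Translating the hypotheses into a neighbourhood condition.} The first and cleanest step is to convert the exponential degree bounds into a linear, Pósa-type statement about neighbourhood sizes. Because $\h$ is simple, every hyperedge through a vertex $v$ is a distinct subset of $N_\h[v]$ containing $v$, so $d_\h(v)\le 2^{|N_\h(v)|}$. Consequently any vertex with $|N_\h(v)|\le k$ has $d_\h(v)\le 2^{k}$, and since~\eqref{Condition:PosaNON-uniform_1} gives $d_k>2^{k}$, at most $k-1$ vertices can satisfy $|N_\h(v)|\le k$, for every $k<n/2$. Writing $\partial\h$ for the $2$-shadow graph on $V(\h)$ with $uv\in E(\partial\h)$ iff some hyperedge contains $\{u,v\}$, we have $\deg_{\partial\h}(v)=|N_\h(v)|$, so the sorted degree sequence of $\partial\h$ strictly dominates Pósa's threshold $k<\deg_{(k)}$ for all $k<n/2$. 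By Theorem~\ref{thm:posa}, $\partial\h$ is Hamiltonian. Notice this already uses only~\eqref{Condition:PosaNON-uniform_1}; the even-case bound~\eqref{Condition:PosaNON-uniform_2} plays no role here, which points to its role being at the level of hyperedge selection.

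\emph{From the shadow to a Berge cycle.} Hamiltonicity of $\partial\h$ yields only a cyclic order $w_1w_2\cdots w_n$ with each consecutive pair covered by \emph{some} hyperedge; to obtain a Berge cycle I must assign each pair $\{w_i,w_{i+1}\}$ a \emph{distinct} defining hyperedge. So I would not take an arbitrary shadow-Hamiltonian cycle, but run Pósa's rotation--extension directly on Berge paths --- equivalently, on paths covering the colour class $A=V(\h)$ in the incidence bipartite graph $G(A,B)$, where a path automatically uses distinct $B$-vertices, i.e.\ distinct hyperedges. Fix a longest Berge path $P$ together with a valid assignment of distinct defining hyperedges, fix one endpoint, and let $T$ be the set of vertices reachable as the other endpoint by rotations that preserve hyperedge-distinctness. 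A rotation pivots at an internal $w_i$ adjacent to the current endpoint $x$, splicing in a new hyperedge containing $\{x,w_i\}$ and releasing the old hyperedge incident to $w_i$; maximality of $P$ gives $N_\h(x)\subseteq V(P)$, after which the standard Pósa bookkeeping bounding $|N_\h(T)|$ against $|T|$ should contradict the neighbourhood condition above.

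\emph{The main obstacle.} The one genuinely new difficulty, absent from the graph case, is guaranteeing that each rotation admits a connecting hyperedge not already used on $P$. This is exactly where the \emph{exponential} strength of the hypothesis is spent rather than wasted: an endpoint $x$ lies in $d_\h(x)$ hyperedges, at most $n-1$ of which occur on $P$, so at least $d_\h(x)-(n-1)$ remain free; as all of them are subsets of $N_\h[x]$, these free hyperedges still reach at least $\log_2\!\big(d_\h(x)-n+1\big)$ distinct neighbours of $x$. Thus $d_i>2^{i}$ survives deletion of the $O(n)$ used hyperedges and becomes an ``available degree'' exceeding $i$ inside $\partial\h$, precisely what the rotation lemma requires. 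I expect the delicate point to be making the count $d_\h(x)-(n-1)$ honest for the small-index endpoints, where $2^{i}$ is comparable to $n$ and the slack nearly vanishes; this is presumably where $n>40$ is consumed, and it is the step that has no counterpart in the proof of Theorem~\ref{thm:posa}.

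\emph{Closure and the even case.} Once $T$ is large, the closure mechanism underlying Theorem~\ref{thm:chvatal} turns the longest Berge path into a Berge cycle through all $n$ vertices; if instead a non-spanning longest Berge cycle appears, the connectivity furnished by the neighbourhood condition lets one further rotation-extension absorb a missing vertex, again contradicting maximality. Finally, for even $n$ the critical index $k=(n-2)/2$ corresponds to the near-Hamiltonian extremal configuration glued along a balanced bipartition, and I expect the sharper bound~\eqref{Condition:PosaNON-uniform_2} to supply exactly the one extra hyperedge needed to route the cycle across that bipartition, mirroring the boundary term in Chvátal's condition and yielding both the contradiction and the sharpness of the theorem; specializing all $d_i$ to a common value should recover Theorem~\ref{thm:F_K_L_non_unif_Hyp} as a consistency check.
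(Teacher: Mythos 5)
Your high-level plan (rotation--extension on Berge paths, converting the exponential degree bounds into linear neighbourhood bounds via $d_\h(v)\le 2^{|N_\h(v)|}$) is the same skeleton the paper uses, but two load-bearing steps are missing, and one of them you have set up in a way that does not close. First, your count of ``free'' hyperedges at an endpoint $x$ subtracts all $n-1$ defining hyperedges of $P$, giving at least $d_\h(x)-(n-1)$ non-defining hyperedges and hence at least $\log_2\bigl(d_\h(x)-n+1\bigr)$ reachable neighbours. With $2^{t}<d_\h(x)\le 2^{t+1}$ this only yields $|N_{\h\setminus P}(x)|\ge t$ when $2^{t-1}>n-1$, i.e.\ it fails precisely for endpoints of small degree --- the regime you yourself flag as ``the delicate point'' and leave unresolved. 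The paper's fix is structural, not a larger $n$: it subtracts only the $k$ defining hyperedges that actually contain $v_1$, and bounds $k$ by noting that each such hyperedge produces (via Permutation~1) a distinct reachable endpoint, so $k\le|T_1|$, and then the degree condition applied to $T_1$ (using that $v_1$ has maximum degree in $T_1$, which in turn requires choosing $P$ to maximize $d(v_1)+d(v_n)$ --- an extremal choice absent from your setup) forces $k\le t$, whence $2^{|N_{\h\setminus P}(v_1)|}\ge d(v_1)-k>2^{t}-2^{t-1}$ and $|N_{\h\setminus P}(v_1)|\ge t$.

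Second, the endgame is not ``standard Pósa bookkeeping bounding $|N_\h(T)|$ against $|T|$,'' nor a Chvátal-style closure. The paper first forces $t\ge\floor{\frac{n-1}{2}}$ for \emph{both} endpoints, then derives the contradiction from a disjointness count: the shifted reachable set $N_{\h_1}[S_1]^-$ of $v_1$ and the reachable set $N_{\h_n}[S_n]$ of $v_n$ must be disjoint subsets of $\{v_1,\dots,v_n\}$ (any common element yields a Hamiltonian Berge cycle by splicing two non-defining hyperedges), each has size at least $\floor{\frac{n+1}{2}}$, and in the tight even case one extra reachable vertex $u$ (respectively $u'$) outside both sets is produced by Claim~\ref{Claim:T_1_size_NON-uniform}, pushing the total to $n+1>n$. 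This is exactly where Condition~\ref{Condition:PosaNON-uniform_2} is consumed (it guarantees a second defining hyperedge $h_y$ through $v_1$, so that $u$ and $u'$ can be chosen non-conflicting); your guess that it ``routes the cycle across a bipartition'' does not correspond to any step of the argument. Your opening observation that the shadow graph $\partial\h$ is Hamiltonian by Theorem~\ref{thm:posa} is correct but, as you concede, does not produce distinct defining hyperedges and plays no role in the actual proof. As it stands the proposal is a programme rather than a proof: the two quantitative claims that carry the argument (the reachable-endpoint lower bound $|T_1|\ge t+1$ and the final disjointness count) are not established.
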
 

In the following subsection, we show that the conditions of  Theorem~\ref{Theorem_Posa_r_Uniform} and Theorem~\ref{Theorem_Posa_non_Uniform} are sharp.

\subsection{Examples showing the sharpness of conditions}
In this subsection, we show that it is impossible to strengthen  Theorem~\ref{Theorem_Posa_r_Uniform} or Theorem~\ref{Theorem_Posa_non_Uniform}  by changing a condition for a given $i$.  We start with Theorem~\ref{Theorem_Posa_r_Uniform} and demonstrate that it is impossible to strengthen it by modifying a condition for some fixed $i=k$, where $1\leq k\leq \floor{\frac{n-1}{2}}$.   

Example~\ref{Example:Degree_Sequence_r_Uniform_k<r} shows the sharpness of Condition~\ref{Condition:Posa1} for all $k$, where $1\leq k <r$. The idea of this construction is to construct a hypergraph with a special vertex set of size $k$ incident with $k$ hyperedges only, therefore no Hamiltonian cycle.

\begin{example}\label{Example:Degree_Sequence_r_Uniform_k<r}
For integers $n$, $r$, and $k$, with the conditions $n > 2r > 2k > 0$, we define $\mathcal{H}^1_k$ as an $n$-vertex, $r$-uniform hypergraph as follows. The vertex set of $\mathcal{H}^1_k$ is partitioned into two disjoint sets, $V_1$ and $V_2$, where $|V_1| = k$ and $|V_2| = n - k$. The hyperedge set of $\mathcal{H}^1_k$ comprises all hyperedges from $\binom{V_2}{r}$ and includes $k$ distinct hyperedges, each of which contains $V_1$ as a proper subset.
\end{example} 

Let  $d_1\leq d_2\leq \dots\leq d_n$ be the degree sequence of $\h^1_k$. We have $d_1=d_2=\dots=d_k=k$ and  $\binom{n-k-1}{r-1}\leq d_{k+1}\leq d_{k+2} \leq \dots \leq d_{n}$. Observe that, for the degree sequence of hypergraph $\h^1_k$ all conditions of Theorem~\ref{Theorem_Posa_r_Uniform} hold except one, Condition~\ref{Condition:Posa1} for $i=k$.  In particular $d_k=k$ instead of $d_k>k$. 
Clearly, $\h^1_k$ is non-Hamiltonian since the vertices in $V_1$ are incident with only $k$ hyperedges, therefore, there is no Berge cycle in  $\h^1_k$ that contains all vertices of $V_1$ and is longer than $\abs{V_1}=k$.

Example~\ref{Example:Degree_Sequence_r_Uniform_k>r} shows the sharpness of Condition~\ref{Condition:Posa2} for all $k$, $r \leq k \leq \floor{\frac{n-1}{2}}$.  The idea behind this construction is to construct a hypergraph with a special vertex set of size $k$, each adjacent to only $k$ vertices.

\begin{example}\label{Example:Degree_Sequence_r_Uniform_k>r}
For integers $n,r$ and $k$, such that   $3\leq r \leq  k < \frac{n}{2}$, let $\h^2_k$ be an $n$-vertex, $r$-uniform hypergraph. Let us partition the vertex set of $\h^2_k$, into three disjoint sets $V_1,V_2$ and $V_3$ of sizes $\abs{V_1}=k$, $\abs{V_2}=k$ and $\abs{V_3}=n-2k$. The hyperedges of $\h^2_k$ are
\[
E(\h^2_k)=\left\{h\in \binom{V(\h^2_k)}{r}:  \left( h\subset V_1\cup V_2  \mbox{ and } \abs{h \cap V_1}=1 \right) \mbox{ or }  h \in V_2\cup V_3   \right\}.
\]
\end{example}
The degree sequence of $\h^2_k$ is $d_1=d_2=\dots=d_k=\binom{k}{r-1}$, $d_{k+1}=\dots=d_{n-k}=\binom{n-k-1}{r-1}$ and $d_{n-k+1}=\cdots=d_{n}=\binom{n-k-1}{r-1}+k\binom{k-1}{r-2}$.
Observe that, for the degree sequence of hypergraph $\h^2_k$ all conditions of Theorem~\ref{Theorem_Posa_r_Uniform} hold except one, Condition~\ref{Condition:Posa2} for $i=k$. 
In particular $d_k=\binom{k}{r-1}$ instead of $d_k>\binom{k}{r-1}$. 
Note that vertices of $V_1$ are pairwise non-adjacent, and the number of vertices adjacent to $V_1$ is $k=\abs{V_1}$.
Therefore, there is no Berge cycle in  $\h^2_k$ containing all vertices of $V_1$ longer than $2\abs{V_1}=2k$, hence $\h^2_k$  is not Hamiltonian since $n>2k$.  

The next example shows the sharpness of Condition~\ref{Condition:Posa3}.  The idea is very similar to the previous example.

\begin{example}\label{Example:Degree_Sequence_r_Uniform_n/2}
For integers $n$ and $r$, such that  $2|n$, $3\leq r < \frac{n}{2}$, let $\h^3$ be an $n$ vertex, $r$-uniform hypergraph. Let us partition the vertex set of $\h^3$, into two disjoint sets $V_1$ and $V_2$ of sizes $\abs{V_1}=\frac{n}{2}+1$ and $\abs{V_2}=\frac{n}{2}-1$. Let us fix a subset of $V_1$ of size $r$ and denote it by $h'$. The hyperedge set of $\h^3$ is 
\[
E(\h^3)=\left\{h\in \binom{V(\h^3)}{r}:  (h\subset V_1\cup V_2  \mbox{ and } \abs{h \cap V_1}\leq 1) \mbox{ or }  h=h'   \right\}.
\]
\end{example}
The degree sequence of $\h^3$ is $d_1=d_2=\dots=d_{\frac{n}{2}+1-r}=\binom{\frac{n}{2}-1}{r-1}$, $d_{\frac{n}{2}+2-r}=\dots=d_{\frac{n}{2}+1}=\binom{\frac{n}{2}-1}{r-1}+1$ and $d_{\frac{n}{2}+2}=\cdots=d_{n}=\binom{\frac{n}{2}-2}{r-1}+(\frac{n}{2}+1)\binom{\frac{n}{2}-2}{r-2}$.
As one can observe all conditions of Theorem~\ref{Theorem_Posa_r_Uniform} hold but Condition~\ref{Condition:Posa3}. The number of vertices in $V_1$  is $\frac{n}{2}+1$, therefore, if there is a Hamiltonian Berge cycle then there should be at least two pairs of consecutive vertices of $V_1$ on the cycle. 
This is not possible since the number of hyperedges incident with at least two vertices of $V_1$ is just one.

These three examples show the sharpness of each condition of Theorem~\ref{Theorem_Posa_r_Uniform}, since examples showing the sharpness of conditions of Theorem~\ref{Theorem_Posa_non_Uniform} are very similar we will omit them in this manuscript.

\section{Proofs}
In this section, we prove Theorem~\ref{Theorem_Posa_r_Uniform} and Theorem~\ref{Theorem_Posa_non_Uniform}, since the methods used to prove the theorems are similar, we start the proof of both theorems together, describe common tools, and in the end, we split the proof into two subsections. 

Let $\F$ be a hypergraph with $V(\F):=\{u_1,u_2,u_3,\dots, u_n\}$, and with a Hamiltonian Berge path
\[P:=u_1,f_1,u_2,f_2,u_3,\dots,f_{n-1},u_n.\]
Where $f_1,f_2,\dots,f_{n-1}$
are distinct hyperedges of $\F$. We use the following three ways to permute the vertices and produce alternative Hamiltonian Berge paths from $P$. 
\begin{itemize}

    \item \textbf{Permutation~$1$- with a defining hyperedge $f_i$.} \\
    If $u_1\in f_i$ then $P'$ is a Hamiltonian Berge path where
\begin{equation*}
    P':=u_{i},f_{i-1},u_{i-1},\dots,u_1,f_i,u_{i+1},f_{i+1},\dots,f_{n-1},u_n.
\end{equation*}

    \item \textbf{Permutation~$2$- with a non-defining hyperedge $f$.} \\
    If $\{u_1,u_{i+1}\}\subset f$, $f\notin\{f_1,f_2,\dots,f_{n-1}\}$ then $P'$ is a Hamiltonian Berge path where
   \begin{equation*}
    P':=u_{i},f_{i-1},u_{i-1},\dots,u_1,f,u_{i+1},f_{i+1},\dots,f_{n-1},u_n.
   \end{equation*}
    
    \item \textbf{Permutation~$3$-  with a defining hyperedge $f_i$ and a non-defining hyperedge $f$.} \\ If for some integers  $i$ and $j$, with $i>j$, if we have  $u_1\in f_i$ and for some hyperedge $f$, $f\notin\{f_1,f_2,\dots,f_{n-1}\}$, we have $u_j,u_{i+1} \in f$ then $P'$ is a Hamiltonian Berge path where

\begin{equation*}
P':=u_{j+1},f_{j+1}, \ldots ,u_{i},f_i, u_1, f_1, u_2 \ldots u_j, f, u_{i+1}, f_{i+1}\ldots,f_{n-1}, u_{n}.
\end{equation*}
\end{itemize}

We prove Theorem~\ref{Theorem_Posa_r_Uniform} and Theorem~\ref{Theorem_Posa_non_Uniform} by assuming a contradiction, let $\h$ be an $r$-uniform / $\mathbb N$-uniform hypergraph satisfying conditions of the corresponding theorem but containing no Hamiltonian Berge cycle.  
Without loss of generality, we may assume that $\h$ is maximal, without containing a Hamiltonian cycle, in the sense that adding any hyperedge to $\h$ creates a Hamiltonian hypergraph. 
Therefore, the longest Berge path in $\h$ is Hamiltonian. 
Let $P$ be a longest Berge path 
\[
P:= v_1, h_1, v_2, h_2\dots, h_{n-1}, v_{n}.
\]
Without loss of generality, we assume $d(v_1)\leq d(v_n)$ and that $d(v_1)+d(v_n)$ is  maximal among all Hamiltonian Berge paths contained in $\h$. Subject to this, let us assume among all such paths $P$ maximizes $\abs{N_{\h\setminus P}(v_1)}+\abs{N_{\h\setminus P}(v_n)}$.

Let $x$ be the largest integer for which $\{v_1,v_2,v_3,\dots,v_x\}\subseteq N_{\h \setminus P}[v_{1}]$. We denote $S_1:=\{v_1\}\cup\{v_2,v_3,\dots,v_{x-1}\}$. 
For each vertex $v_i\in \{v_2,v_3,\dots,v_x\}$, let $P_{v_i}$ be a Berge path obtained by altering $P$ with Permutation~$2$ using some non-defining hyperedge incident with $v_i$ and $v_1$.
Let us denote the family of these Hamiltonian Berge paths including $P$ with $\P$.  
There are $x$ Hamiltonian Berge paths in $\P$, in particular for each vertex $v_j$ of $S_1$ there is a  Hamiltonian Berge path $P_{v_j}$ in $\P$, starting at $v_j$ and finishing at $v_n$. Note that for each $h_i$, $1\leq i\leq x-1$, there is a  Hamiltonian Berge path $P_{v_{i}}$ in $\P$ not using $h_i$ as a defining hyperedge and the terminal vertex $v_i$ is incident to~$h_i$.   
We denote 
\[
\h_1:=(\h\setminus P) \cup \{h_1,\dots,h_{x-1}\}.  
\]
Let $T_1$ be the set of terminal vertices of Hamiltonian paths whose other terminal vertex is $v_n$.
Note that $v_1 \in T_1$ and $v_1$ has the maximum degree among all the vertices of $T_1$, therefore, we have $d(v_1)>min\left\{\abs{T_1},\floor{\frac{n-1}{2}} \right\}$ from the conditions of Theorem~\ref{Theorem_Posa_r_Uniform} and  Theorem~\ref{Theorem_Posa_non_Uniform}. 
Let us denote the number of defining hyperedges $h_i$ incident with $v_1$  with $k$.  For a set $A$, $A\subseteq \{v_1,v_2,\dots,v_n\}$, we define the left shift of $A$ as
\[
A^-:=\{v_i:v_{i+1}\in A, i\geq 1\}.
\]
Note that if $v_1\in A$ then $A^-$ contains one less vertex than $A$. 
The right shift of set A is defined analogously and it is denoted by $A^+$.

\begin{claim}\label{Claim:basic}
We have $ N_{\h_1}[S_{1}]^-\cup  \{v_{y}| v_{i}\in h_y,i<x\leq y\} \subseteq T_{1}$.
\end{claim}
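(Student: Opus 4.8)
The plan is to prove the claim by showing that every vertex appearing on the left-hand side is the terminal vertex, opposite to $v_n$, of some Hamiltonian Berge path; by the definition of $T_1$ this is exactly the assertion. Recall that we have already observed $S_1\subseteq T_1$: for each $v_j\in S_1$ the family $\P$ contains a Hamiltonian Berge path $P_{v_j}$ from $v_j$ to $v_n$, and for $1\le j\le x-1$ this path does not use $h_j$ as a defining hyperedge while its terminal vertex $v_j$ is incident to $h_j$. The whole idea is the Pósa rotation: from one of these base paths a single application of Permutation~$1$ or Permutation~$2$ that absorbs an edge reaching $v_{i+1}$ turns $v_i$ into a new terminal vertex, and reaching a vertex $v_y$ through a defining hyperedge turns $v_y$ into a new terminal vertex.

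First I would handle the set $N_{\h_1}[S_1]^-$. Let $v_i$ be a member, so $v_{i+1}\in N_{\h_1}[S_1]$ and $i\ge 1$. If $v_{i+1}\in S_1$ then $i+1\le x-1$, hence $v_i\in S_1\subseteq T_1$. So assume $v_{i+1}\in N_{\h_1}(S_1)\setminus S_1$, which forces $i+1\ge x$, i.e.\ $i\ge x-1$; if $i=x-1$ then again $v_i\in S_1\subseteq T_1$, so I may assume $i\ge x$. There is a hyperedge $h\in\h_1$ with $v_{i+1}\in h$ meeting $S_1$, and I split on its type. If $h\in\h\setminus P$, I pick $v_j\in S_1\cap h$ and apply Permutation~$2$ to $P_{v_j}$ using $h$ as the non-defining hyperedge; should $h$ coincide with the non-defining hyperedge inserted when forming $P_{v_j}$ (in which case $v_1\in h$), I instead apply Permutation~$2$ to the original path $P$, in which every non-path hyperedge is non-defining and whose terminal vertex is $v_1$. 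If instead $h=h_m$ with $1\le m\le x-1$, then $h_m$ is non-defining in $P_{v_m}$ and is incident both to its terminal vertex $v_m$ and to $v_{i+1}$, so I apply Permutation~$2$ to $P_{v_m}$. In each situation $i\ge x$ gives $i>j$ (respectively $i>m$), so $v_{i+1}$ occupies position $i+1$ of the unaltered tail of the chosen base path and the rotation relabels the terminal vertex to exactly $v_i$; hence $v_i\in T_1$.

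Next I would treat the set $\{v_y : v_i\in h_y,\ i<x\le y\}$. Fix such a $v_y$ together with an index $i<x$ with $v_i\in h_y$; then $v_i\in S_1$, so the base path $P_{v_i}$ from $v_i$ to $v_n$ is available. Since $y\ge x>i$, the hyperedge $h_y$ lies in the untouched tail of $P_{v_i}$ and is therefore a defining hyperedge of $P_{v_i}$ joining $v_y$ and $v_{y+1}$; moreover the terminal vertex $v_i$ of $P_{v_i}$ is incident to $h_y$. Hence Permutation~$1$ applied to $P_{v_i}$ with the defining hyperedge $h_y$ rotates it into a Hamiltonian Berge path whose new terminal vertex is $v_y$, giving $v_y\in T_1$. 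Together with the previous paragraph this establishes both inclusions.

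I expect the main obstacle to be the bookkeeping in the two rotation cases: one must choose, from the family $\P$, a base path whose terminal vertex genuinely lies in the relevant hyperedge and in which that hyperedge plays the correct role — non-defining for Permutation~$2$, defining for Permutation~$1$ — and then confirm that the target vertex ($v_{i+1}$, respectively $v_{y+1}$) sits in the unaltered tail so that the rotation relabels the endpoint to precisely $v_i$ (respectively $v_y$). The inclusion $\h_1\supseteq\{h_1,\dots,h_{x-1}\}$ is exactly what makes the reused hyperedges $h_m$ available as non-defining hyperedges of $P_{v_m}$, and the only genuinely delicate point is the possible coincidence of the connecting hyperedge with the hyperedge inserted in building $P_{v_j}$, which is circumvented by reverting to the original path $P$.
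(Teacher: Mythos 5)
Your proposal is correct and follows essentially the same route as the paper: both arguments start from the rotated family $\P$ of Hamiltonian Berge paths $P_{v_j}$ ending at $v_n$, and obtain each vertex of $N_{\h_1}[S_1]^-$ (respectively of $\{v_y \mid v_i\in h_y,\ i<x\leq y\}$) as a new terminal vertex via one further application of Permutation~$2$ (respectively Permutation~$1$) to a suitably chosen base path. Your write-up is merely more explicit about the bookkeeping — in particular the choice of base path making the connecting hyperedge non-defining, and the coincidence with the inserted hyperedge $f_j$ — which the paper's proof leaves implicit.
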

\begin{proof}
For each vertex $v_i\in S_1$ there is a path $P_{v_i}\in \P$ such that $v_i$ is a terminal vertex of $P_{v_i}$, where $P_{v_i}$ is a Hamiltonian path obtained from $P$ after altering it with Permutation~$2$. 
Now consider  a Hamiltonian Berge path obtained from $\P_{v_i}$ for $v_i \in S_1$ and alter it with Permutation~$2$ for each non-defining hyperedge incident with $v_i$ thus we get  $N_{\h\setminus \{h_i\}}[v_i]^-\subseteq T_1$. 
By altering $\P_{v_i}$ with Permutation~$1$ for each defining hyperedge incident with $v_i$, we get  $  \{v_{y}| v_{i}\in h_y,y\geq x\}\subseteq T_1$. 
Thus we have 
\[
N_{\h_1}[S_1]^-\cup  \{v_{y}| v_{i}\in h_y,i<x\leq y\}=\bigcup_{v_i\in S_1} N_{\h \setminus P_{v_i}}[v_i]^-\cup  \{v_{y}| v_{i}\in h_y,x \leq y\}.
\] 

\end{proof}

From here we break the proofs of Theorem~\ref{Theorem_Posa_r_Uniform} and Theorem~\ref{Theorem_Posa_non_Uniform}. In the following sub-section,  we prove Theorem~\ref{Theorem_Posa_r_Uniform} and then we prove  Theorem~\ref{Theorem_Posa_non_Uniform}.

\subsection{Proof of the theorem for uniform hypergraphs}

\begin{proof}[Proof of Theorem~\ref{Theorem_Posa_r_Uniform}]

With the sequence of the following claims we are going to prove $d(v_n)\geq d(v_1)>
\binom{\floor{\frac{n-1}{2}}}{r-1}+\mathbbm{1}_{2|n}$. At first we show that $d(v_1)>\binom{r+1}{r-1}$ with the following claim.

\begin{claim}\label{Claim_Size_Degree_v}
The size of $T_1$ is at least  $r+1$.
\end{claim}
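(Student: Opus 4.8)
===PLAN===

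The goal of Claim~\ref{Claim_Size_Degree_v} is to show $\abs{T_1}\geq r+1$, which will subsequently force $d(v_1)>\binom{r+1}{r-1}$ since $v_1$ has maximum degree among the vertices of $T_1$. The plan is to exhibit at least $r+1$ distinct vertices that must lie in $T_1$, the set of possible terminal vertices opposite $v_n$ among Hamiltonian Berge paths in $\h$.

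First I would invoke Claim~\ref{Claim:basic}, which already tells us $N_{\h_1}[S_1]^-\cup \{v_y \mid v_i\in h_y,\, i<x\leq y\}\subseteq T_1$. So it suffices to lower bound the size of this union. The natural first source of terminal vertices is the closed neighborhood of $v_1$ inside $\h_1=(\h\setminus P)\cup\{h_1,\dots,h_{x-1}\}$: by the very definition of $x$, the vertices $v_1,v_2,\dots,v_x$ all belong to $N_{\h\setminus P}[v_1]$, so these $x$ consecutive vertices contribute, after left-shifting, roughly $x-1$ vertices $v_1,\dots,v_{x-1}$ to $T_1$ (the set $S_1$ itself essentially realizes these as genuine terminal vertices via the paths $P_{v_i}$). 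The second, and crucial, source is the requirement that $v_1$ have large degree: the hypergraph is $r$-uniform, so any single hyperedge incident with $v_1$ already has $r$ vertices, hence $\abs{N_{\h\setminus P}[v_1]}\geq r$ as soon as $v_1$ sits in even one non-path hyperedge, and more generally the conditions~\eqref{Condition:Posa1}--\eqref{Condition:Posa2} guarantee $d(v_1)$ is large enough that $v_1$'s neighborhood cannot be confined to fewer than $r+1$ vertices unless those neighbors are themselves clustered.

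The key step, and where I expect the main obstacle, is controlling the left-shift operation and ensuring the counted terminal vertices are genuinely \emph{distinct} and genuinely lie in $T_1$. The subtlety is twofold. First, left-shifting can collapse: if $v_{i+1}$ and $v_{j+1}$ are both neighbors but $v_i=v_j$ after shifting we lose count, so I would argue that the indices of the neighbors of $v_1$ along the path are spread out enough — using that consecutive path-neighbors of $v_1$ cannot both create a shortcut without contradicting maximality of $P$ or of $d(v_1)+d(v_n)$. Second, I must handle the regime $x<r$ versus $x\geq r$ separately: when $x$ is small the vertices $v_1,\dots,v_{x-1}$ give too few terminals on their own, so I would lean on the $r$-uniformity to produce a hyperedge through $v_1$ meeting vertices $v_y$ with $y\geq x$, feeding the second term $\{v_y\mid v_i\in h_y,\, i<x\leq y\}$ of Claim~\ref{Claim:basic}. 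Assembling these: either $x\geq r+1$ and $S_1$ alone (plus one more from a non-path edge) yields $r+1$ terminals, or $x\leq r$ and then $v_1$'s hyperedge(s) of size $r$ reach at least $r-x+1$ fresh vertices beyond index $x$, which combine with the $x-1$ vertices from $S_1$ and the vertex $v_1$ to total at least $r+1$. The clean way to finish is to observe that $v_1\in N_{\h_1}[S_1]$ trivially, and that any non-defining hyperedge through $v_1$ (which exists because $d(v_1)$ exceeds the number $k$ of defining hyperedges through $v_1$ by the degree conditions) supplies $r-1$ further neighbors; after the shift and a careful distinctness argument these exceed $r$, giving $\abs{T_1}\geq r+1$.
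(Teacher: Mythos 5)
Your plan does not close the critical cases, and the arithmetic in your own sketch stops at $r$ rather than $r+1$. The cheap part of the claim is easy: the degree conditions give $d(v_1)>\abs{T_1}$ whenever $\abs{T_1}\leq r$, and if $v_1$ lies in two non-defining hyperedges $e_1,e_2$ with $\abs{e_1\cup e_2}\geq r+2$, or with $v_2\notin e_1\cup e_2$, then Claim~\ref{Claim:basic} already yields $\abs{\{v_1\}\cup(e_1\cup e_2)^-}\geq r+1$ and you are done (this is the paper's Case 1). The entire difficulty is the degenerate situation where every hyperedge through $v_1$ is confined to a fixed set of $r+1$ vertices: a single non-defining hyperedge $h\ni v_1$ contributes only the $r-1$ vertices of $h^-$ plus $v_1$, i.e.\ at most $r$ terminals, and each additional hyperedge through $v_1$ forced by $d(v_1)>\abs{T_1}$ may be a defining hyperedge $h_i$ whose contribution $v_i$ already lies in that set, or another non-defining hyperedge inside the same $r+1$ vertices. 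Your proposal never rules this out; it asserts that hyperedges through $v_1$ must "reach at least $r-x+1$ fresh vertices beyond index $x$," which does not follow from the definition of $x$ (a hyperedge through $v_1$ need not contain any $v_j$ with $j>x$), and even granting it your count totals $r$, not $r+1$.

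The paper's proof spends essentially all of its effort on exactly this degenerate case (Cases 2 and 3): assuming $\abs{T_1}=r$, it uses Permutation~3 repeatedly to rotate the path, shows step by step that $\{v_1,\dots,v_{i+1}\}$ must be contained in the union $e_1\cup e_2\cup h_1$, forces the indices $x_i=i$, and finally derives a contradiction because $r+1$ distinct hyperedges through $v_1$ cannot all be distinct $r$-subsets of an $(r+1)$-set containing $v_1$ — there are only $r$ such subsets. This rotation-and-pigeonhole mechanism is the key idea of the proof, and it is absent from your plan; without it the claim does not follow from Claim~\ref{Claim:basic} and counting alone.
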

\begin{proof}
We prove the claim in three cases depending on how many non-defining hyperedges are incident with $v_1$. 

\textbf{Case $1$.}
The vertex $v_1$ is incident with at least two non-defining hyperedges $e_1$ and $e_2$, none of which is incident with $v_2$ then we have $\{v_1\} \cup ( e_1\cup e_2)^- \subseteq T_1$, by Claim~\ref{Claim:basic}. We have  $\abs{e_1\cup e_2}\geq r+1$ and $v_2\notin e_1\cup e_2$. Hence we have  $\abs{T_1}\geq r+1$. 

\textbf{Case 2.} The vertex $v_1$ is incident with at least two non-defining hyperedges $e_1$ and $e_2$, but at least one of them is incident with $v_2$. Without loss of generality, we may assume $e_1$ is incident with $v_2$. Then we may replace $h_1$ with $e_1$ in $P$, without changing defining vertices of $P$. Hence we have $(e_1\cup e_2 \cup h_1)^-\subseteq T_1$ by Claim~\ref{Claim:basic}. We have  $\abs{(e_1\cup e_2 \cup h_1)^-}\geq r$ hence either we are done or we may suppose to the contrary that $\abs{T_1}=\abs{(e_1\cup e_2 \cup h_1)^-}=r$. Note that in this case $e_1, e_2$ and $h_1$ are distinct hyperedges which are incident with $v_1$ and all but one vertex from $(e_1\cup e_2 \cup h_1)$. In particular, for any two distinct vertices from   $e_1\cup e_2 \cup h_1$ there exists a hyperedge  $e_1, e_2$ or $h_1$ incident with both of the vertices.

We have $d(v_1)>\abs{T_1}= r$ therefore, there exists a hyperedge $h'$ incident with $v_1$ which is not a subset of $e_1\cup e_2 \cup h_1$. 
If $h'$ is not a defining hyperedge, then 
$(h')^- \subset T_1$ by  Claim~\ref{Claim:basic}, 
which is a contradiction since $(e_1\cup e_2 \cup h_1)^-=T_1$ and $(h')^- \not\subseteq (e_1\cup e_2 \cup h_1)^-$. 
Hence $h'$ is a defining hyperedge $h_i$ for some fixed $i$.
We have $v_{i}\in T_1$ by Claim~\ref{Claim:basic}. 
Since $T_1=(e_1\cup e_2 \cup h_1)^-$ we have $v_{i+1} \in e_1\cup e_2 \cup h_1$. In the following part of the proof we  show that $\{v_1,v_2,\ldots, v_{i+1}\} \subseteq e_1\cup e_2 \cup h_1$. 

We have $\{v_1,v_2,v_{i+1} \} \subseteq e_1\cup e_2 \cup h_1$.
Let $\gamma$ with $i+1>\gamma\geq 2$, be the minimum integer such that $v_{\gamma} \notin T_1$ if it exists. We have $v_{\gamma-1}  \in e_1\cup e_2 \cup h_1$.  We  have shown that one of the hyperedges $e_1, e_2$ or $h_1$ is incident with $v_{\gamma-1}$ and $v_{i+1}$. 
If such hyperedge is $h_1$, we may exchange $h_1$ with $e_1$, in $P$, and the hyperedge incident with   $v_{\gamma-1}$ and $v_{i+1}$ will be non-defining. Thus, we may assume without loss of generality that it is $e_j$ for some $j\in [2]$. After altering $P$ with Permutation~$3$ for defining hyperedge $h_i$ and non-defining hyperedge $e_j$,  we get  $v_{\gamma}\in T_1$. A contradiction to $v_{\gamma} \notin T_1$.

Finally we have $\{v_i,v_{i+1}\} \subseteq e_1\cup e_2 \cup h_1$, we  have shown that one of the hyperedges $e_1, e_2$ or $h_1$ is incident with $v_{i}$ and $v_{i+1}$. We may assume without loss of generality that it is $e_2$. Hence we may replace $h_i$ with  $e_2$. By Claim~\ref{Claim:basic} we have $h_i^-\subset T_1$. Since $h_i \not\subset e_1\cup e_2 \cup h_1$ we have $\abs{T_1}>r$, a contradiction. 

\textbf{Case $3$.} The vertex $v_1$ is not incident with at least two non-defining hyperedges. 
If vertex $v_1$ is not incident with any non-defining hyperedge then, for each defining hyperedge $h_i$ incident with $v_1$, $v_i \in T_1$ by Claim~\ref{Claim:basic} and we are done. Thus we may assume the vertex $v_1$ is incident with exactly one non-defining hyperedge.

Let $h$ be a $P$ non-defining hyperedge incident with $v_1$. Then we have $h^-\subset T_{1}$. If $v_2 \in h$, then we may replace $h_1$ with $h$ and apply the same for the replaced path and $h_1$. Hence we have $(h_1\cup h)^-\in T_1$. If $v_2 \notin h$, then $(\{v_2\}\cup h)^-\in T_1$. In both cases, we have $\abs{T_1}\geq r$. Either we are done or we have $\abs{T_1}=r$ and $d(v_1)\geq r+1$.

Let $h_{x_1}, h_{x_2},\ldots, h_{x_r}$ be $P$ defining hyperedges incident with $v_1$, for some $1=x_1<x_2<\ldots<x_r\leq n$. Since $\abs{T_1}=r$, we have $T_1=\{v_{x_1},v_{x_2}, \ldots, v_{x_r}\}$ and $h\subseteq \{v_1, v_{x_1+1}, v_{x_2+1}, \ldots, v_{x_r+1}\}$. We have $v_2\in T_1$, since we can remove $v_1,h_1$ from $P$ and replace  $v_{x_i}, h_{x_i}, v_{x_i+1}$ by $v_{x_i}, h_{x_i}, v_1, h, v_{x_i+1}$ for some $i$, $v_{x_i+1}\in h$, i.e. altering $P$ with Permutation~$3$. Hence we have $x_2=2$. 
We have $h\subseteq \{v_1, v_{2}, v_{3},v_{x_3+1}, \ldots, v_{x_r+1}\}$, in particular, $h$ is a hyperedge incident with all but one vertex from the set $\{v_1, v_{2}, v_{3},v_{x_3+1}, \ldots, v_{x_r+1}\}$. We may assume $v_3\in h$, otherwise  $v_2\in h$ and we may  replace $h_1$ with $h$. Similarly as for $h$ we have $h_1\subseteq \{v_1, v_{2}, v_{3},v_{x_3+1}, \ldots, v_{x_r+1}\}$ and $h_1$ is different from $h$. Hence we have $v_3 \in h_1$.  If we replace the beginning of the path $P$,  $v_1,h_1,v_2,h_2,v_3$ with $v_2,h_1,v_1,h,v_3$, we deduce $h_2 \subseteq  \{v_1, v_{2}, v_3, v_{x_3+1}, \ldots, v_{x_r+1}\}$. Either $h$ or $h_2$ is incident with $v_{x_r+1}$, let $h'$ be the one, then the following is a Hamiltonian Berge path
\[
v_{3}, h_{3}, \ldots, v_{x_r}, h_{x_r}, v_1, h_1, v_2, h', v_{x_r+1}, \ldots, v_n.
\]
 Thus we have $v_3\in T_1$. 
Similarly as for $h_2$ we have $h_3 \subseteq  \{v_1, v_{2}, v_{3},v_4, v_{x_{5}+1} \ldots, v_{x_r+1}\}$. 

If $v_2\in h$, we may replace $h_1$ with $h$ in $P$. Either $h_1$ or $h$  contains $v_3$, so we may replace $h_2$ with that hyperedge in $P$.   Otherwise if $v_2\notin h$ then $\{v_3,v_4\}\in h$ and we may replace $h_3$ with $h$ in $P$. The hyperedge $h_3$ contains $v_2$ hence we may replace $h_1$ with it. 
Therefore, we have three different candidates $h, h_1, h_2$ for a non-defining hyperedge. 
Hence given any two distinct vertices $v',v''\in \{v_{2}, v_{3},v_4, v_{x_4+1} \ldots, v_{x_r+1}\}$,  we may assume $v',v''\in h$, without loss of generality. 
Hence, altering the path $P$ with Permutation~$3$ implies that $x_i=i$ for all possible $i$.
We have a contradiction, since every hyperedge $h,h_1,h_2,\dots, h_r$ is a distinct subset of $v_1,v_2,\dots,v_{r+1}$ and is incident to $v_1$.  Hence we have  $\abs{T_1}\geq r+1$ and we are done. 

\end{proof}

By Claim~\ref{Claim_Size_Degree_v} we have $d(v_1)> \binom{r+1}{r-1}$.  Therefore, there exists an integer $t\geq r+1$,  for which 
\[
\binom{t}{r-1}<d(v_1)\leq \binom{t+1}{r-1}.
\]
Recall the number of defining hyperedges incident with $v_1$ is denoted by $k$. Let  $\mathbbm{1}_{r\geq 4}$ be a function which equals to one for all $r\geq 4$ and zero if $r=3$.

\begin{claim}\label{claim:neighborhood_size_of_v_1}
Let $k\leq \floor{\frac{n-1}{2}}+\mathbbm{1}_{r\geq 4}$, then we have $\abs{N_{\h\setminus P}(v_1)}\geq t$ or $\abs{N_{\h\setminus P}(v_1)}= t-1$, $k=t$ and $r=3$.

\end{claim}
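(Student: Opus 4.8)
The plan is to compare $d(v_1)$ with its number of non-defining neighbours by a direct count, and to reduce the whole statement to a single structural inequality bounding the number $k$ of defining hyperedges through $v_1$. Set $m:=\abs{N_{\h\setminus P}(v_1)}$. First I would split the $d(v_1)$ hyperedges through $v_1$ into the $k$ defining ones and the $d(v_1)-k$ non-defining ones. Every non-defining hyperedge through $v_1$ is an $r$-set containing $v_1$ and otherwise contained in $N_{\h\setminus P}(v_1)$, so there are at most $\binom{m}{r-1}$ of them; hence
\[
d(v_1)-k\le \binom{m}{r-1}.
\]
Combining this with $\binom{t}{r-1}<d(v_1)$, if $m\le t-1$ then $\binom{t-1}{r-1}\ge\binom{m}{r-1}\ge d(v_1)-k>\binom{t}{r-1}-k$, which yields $k>\binom{t-1}{r-2}$.

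The heart of the argument is the inequality $k\le m+1$: the number of defining hyperedges through $v_1$ exceeds the number of its non-defining neighbours by at most one. I would prove this from the extremal choice of $P$ together with the rotation moves. Each defining hyperedge $h_i\ni v_1$ with $i\ge 2$ gives, via Permutation~$1$, a Hamiltonian Berge path $P_i$ terminating at $v_i$ with $\h\setminus P_i=\h\setminus P$, so $v_i\in T_1$. If two distinct defining hyperedges through $v_1$ both failed to enlarge $N_{\h\setminus P}(v_1)$, I would combine the corresponding rotation with a non-defining hyperedge through $v_1$ via Permutation~$3$ to obtain either a closing hyperedge for a Hamiltonian Berge cycle, contradicting non-Hamiltonicity, or a Hamiltonian Berge path that strictly improves the pair $\bigl(d(\cdot)+d(v_n),\ \abs{N_{\h\setminus P}(\cdot)}+\abs{N_{\h\setminus P}(v_n)}\bigr)$ in the lexicographic order defining $P$, a contradiction. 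The hypothesis $k\le\floor{\frac{n-1}{2}}+\mathbbm{1}_{r\ge 4}$ keeps the reachable endpoint set below half of $V(\h)$, so these rotations do not wrap around the path and the bookkeeping is valid.

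Given $k\le m+1$, the two branches follow by arithmetic. If $m\le t-1$ then $k\le m+1\le t$. For $r\ge 4$ one checks $\binom{t-1}{r-2}\ge t$ for every $t\ge r+1$, so the lower bound $k>\binom{t-1}{r-2}\ge t$ contradicts $k\le t$; hence $m\ge t$. For $r=3$ we have $\binom{t-1}{r-2}=t-1$, so $m\le t-1$ only forces $k\ge t$, and together with $k\le m+1\le t$ this gives $k=t$ and $m=t-1$. To see that $m\le t-2$ is impossible here, feed $k\le m+1$ into the count to get $\binom{m}{2}+m+1>\binom{t}{2}$ whenever $m\le t-1$; at $m=t-2$ this reads $\binom{t-2}{2}+(t-1)>\binom{t}{2}$, i.e. $t<2$, which fails for $t\ge 4$. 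This leaves exactly the stated dichotomy.

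The main obstacle is the structural inequality $k\le m+1$. The counting step and the closing arithmetic are routine, but controlling the number of defining hyperedges through $v_1$ by the size of its non-defining neighbourhood requires a careful orchestration of Permutations~$1$--$3$ under the extremal choice of $P$; it is precisely the off-by-one in this bound, together with the failure of $\binom{t-1}{r-2}\ge t$ when $r=3$, that produces the exceptional configuration $\abs{N_{\h\setminus P}(v_1)}=t-1$, $k=t$, $r=3$ in the statement.
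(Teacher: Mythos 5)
Your counting step (at most $\binom{m}{r-1}$ non-defining hyperedges through $v_1$, hence $d(v_1)-k\le\binom{m}{r-1}$, hence $k>\binom{t-1}{r-2}$ when $m\le t-1$) and your closing arithmetic both match the paper. The gap is the inequality you call the heart of the argument, $k\le m+1$. You assert that it follows ``from the extremal choice of $P$ together with the rotation moves,'' but the sketch does not actually produce a proof: it is not clear what it means for a defining hyperedge through $v_1$ to ``fail to enlarge $N_{\h\setminus P}(v_1)$,'' Permutation~$1$ leaves the set of defining hyperedges unchanged (so the rotated path $P_i$ has $\h\setminus P_i=\h\setminus P$ and its endpoint's non-defining neighbourhood is simply $N_{\h\setminus P}(v_i)$, unrelated to $m$), and the extremality of $P$ only constrains the new endpoint when its degree equals $d(v_1)$. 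There is no visible mechanism by which rotations alone bound the number of defining hyperedges through $v_1$ by the size of its non-defining neighbourhood; a vertex lying in many defining hyperedges and few non-defining ones is only excluded by the \emph{degree-sequence hypotheses} of the theorem, which your argument for $k\le m+1$ never invokes.

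The paper bounds $k$ by a different and genuinely provable route: each defining hyperedge $h_i\ni v_1$ yields, via Permutation~$1$, a distinct endpoint $v_i\in T_1$, so $\abs{T_1}\ge k$; since $v_1$ has maximum degree among $T_1$, at least $k$ vertices have degree at most $d(v_1)\le\binom{t+1}{r-1}$, and Condition~(\ref{Condition:Posa2}) (applicable because $k\le\floor{\frac{n-1}{2}}+\mathbbm{1}_{r\ge 4}$ --- this is the real role of that hypothesis, not any ``wrap-around'' issue) forces $k\le t+\mathbbm{1}_{r\ge 4}$. Substituting this bound for your $k\le m+1$ makes the rest of your argument go through: for $r\ge 4$ one has $k\le t+1\le\binom{t-1}{r-2}$ since $t\ge r+1\ge 5$, contradicting $k>\binom{t-1}{r-2}$; for $r=3$ one gets $k=t$ and then $\binom{m}{2}\ge d(v_1)-t>\binom{t}{2}-t=\binom{t-1}{2}-1$ forces $m\ge t-1$, which is exactly the exceptional case. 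So the fix is to replace your structural inequality with the degree-sequence argument; as written, the proposal has a genuine hole at its central step.
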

\begin{proof}
By altering $P$ with Permutation~$1$ for each defining hyperedge incident with $v_1$, we get distinct terminal vertices for each hyperedge.
Thus we have $\abs{T_1}\geq k$. From the conditions of Theorem~\ref{Theorem_Posa_r_Uniform}, since $v_1$ has maximum degree among all vertices of $T_1$, we have   $k \leq t+\mathbbm{1}_{r\geq 4}$.

The vertex $v_1$ is incident with $k$ defining hyperedges, and the rest of the hyperedges which are incident with $v_1$ are incident to $r-1$ other vertices in $N_{\h\setminus P}(v_1)$,  by Claim~\ref{Claim:basic}. 
Thus, we may upper bound the number  of $P$ non-defining  hyperedges incident with the vertex $v_1$ with 
$\binom{\abs{N_{\h\setminus P}(v_1)}}{r-1}$. Hence we have
\begin{equation}\label{Equation:neighborhood_size_of_v_1}
\binom{\abs{N_{\h\setminus P}(v_1)}}{r-1}+k\geq d(v_1)>  \binom{t}{r-1}=\binom{t-1}{r-1}+\binom{t-1}{r-2}.
\end{equation}
 If $r\geq 4$, then  $k \leq t+1\leq \binom{t-1}{2}\leq  \binom{t-1}{r-2}$ since $t\geq r+1\geq 5$  which implies together with Equation~\ref{Equation:neighborhood_size_of_v_1}  that $\abs{N_{\h\setminus P}(v_1)}\geq t$.  If $r=3$ and $k \leq t-1$, then  from  Equation~\ref{Equation:neighborhood_size_of_v_1} we have  $\abs{N_{\h\setminus P}(v_1)}\geq t$. Otherwise we have  $\abs{N_{\h\setminus P}(v_1)}= t-1$, $k=t$ and $r=3$.
\end{proof}

Recall that  $x$ denotes the largest integer for which $\{v_1,v_2,\dots,v_x\}\subseteq N_{\h \setminus P}[v_{1}]$.
 

\begin{claim}\label{Claim:T_1_size}
Let  $\abs{N_{\h_1}[S_1]}=\abs{N_{\h\setminus P}[v_1]}=t+1$.  Then $v_x\in T_1$ or $v_y \in T_1$, for some $y$ with $v_{y+1} \notin N_{\h_1}[S_1]$ and $v_1\in h_y$.

\end{claim}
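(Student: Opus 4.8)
The plan is to produce, under the hypothesis $N_{\h_1}[S_1]=N_{\h\setminus P}[v_1]$, a terminal vertex lying outside the shift that Claim~\ref{Claim:basic} already places in $T_1$. Abbreviate $W:=N_{\h\setminus P}[v_1]$; since $N_{\h\setminus P}[v_1]\subseteq N_{\h_1}[S_1]$ holds trivially and the two sets share the cardinality $t+1$, we in fact have $N_{\h_1}[S_1]=W$. Recall $\{v_1,\dots,v_x\}\subseteq W$ while $v_{x+1}\notin W$, so neither candidate vertex of the statement, $v_x$ nor any $v_y$ with $v_{y+1}\notin W$, lies in $W^-$; exhibiting either one therefore strictly enlarges the lower bound on $\abs{T_1}$ coming from $W^-\subseteq T_1$. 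I also record the reduction, via Permutation~$1$, that $v_1\in h_y$ already forces $v_y\in T_1$; hence the second alternative holds the moment we find a defining hyperedge incident to $v_1$ whose successor $v_{y+1}$ leaves $W$. Finally we may assume $x\ge 2$, as $v_x=v_1\in T_1$ settles $x=1$.

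The first real step is a counting observation. Every non-defining hyperedge incident to $v_1$ is an $r$-subset of $W$ containing $v_1$, its remaining $r-1$ vertices lying in $N_{\h\setminus P}(v_1)=W\setminus\{v_1\}$; there are at most $\binom{t}{r-1}$ such subsets, and each is contained in $W$. Thus the number of hyperedges at $v_1$ contained in $W$ is at most $\binom{t}{r-1}<d(v_1)$, so $v_1$ is incident to a hyperedge $h\not\subseteq W$, and $h$ must be a \emph{defining} hyperedge $h_y$ since the non-defining ones stay inside $W$. Because $h_1,\dots,h_{x-1}\subseteq W$ (each is incident to some $v_i\in S_1$ and lies in $\h_1$, hence inside $N_{\h_1}[S_1]=W$), the escaping edge has index $y\ge x$. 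If $v_{y+1}\notin W$ the reduction above gives the second alternative; otherwise $v_{y+1}\in W$, which excludes $y=x$ and forces $y\ge x+1$, and $h_y$ escapes $W$ through some $v_q\in h_y$ with $v_q\notin W$, $q\notin\{1,y+1\}$. The plan is then to feed this extra incidence of $v_1$, together with a non-defining hyperedge, into Permutation~$3$ with defining hyperedge $h_y$ and $j=x-1$: since $i=y>j=x-1$, its left endpoint would be $u_{j+1}=v_x$, yielding $v_x\in T_1$.

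I expect this last step to be the main obstacle. Completing the Permutation~$3$ rotation requires a non-defining hyperedge joining $v_{x-1}$ (or another block vertex from which a short chain of rotations reaches $v_x$) to the vertex $v_{y+1}\in W=N_{\h_1}[S_1]$; while $v_{y+1}$ is guaranteed to be $\h_1$-adjacent to \emph{some} vertex of $S_1$, pinning that neighbour down to be $v_{x-1}$ is exactly where the delicate bookkeeping enters. I would handle it by tracking, as in the proof of Claim~\ref{Claim:basic}, which hyperedges are freed along the paths $P_{v_i}\in\P$, and by invoking the maximality of $P$ (it maximizes $d(v_1)+d(v_n)$ and then $\abs{N_{\h\setminus P}(v_1)}+\abs{N_{\h\setminus P}(v_n)}$) to exclude configurations in which no such re-attachment exists; in the extreme case this dichotomy again forces a defining hyperedge at $v_1$ to spill outside $W$, returning us to the second alternative. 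The combinatorics of chaining these rotations, rather than any single clever permutation, is where the work concentrates.
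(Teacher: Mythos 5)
Your setup matches the paper's: you correctly reduce to $N_{\h_1}[S_1]=N_{\h\setminus P}[v_1]=:W$, count that at most $\binom{t}{r-1}$ hyperedges incident with $v_1$ can be subsets of $W$, conclude that some \emph{defining} hyperedge $h_y$ with $y\geq x$ escapes $W$ (using $h_1,\dots,h_{x-1}\subseteq W$), split on whether $v_{y+1}\in W$, and aim to finish the remaining case by Permutation~$3$ applied to $h_y$ together with a hyperedge containing both $v_{x-1}$ and $v_{y+1}$. This is exactly the paper's skeleton.

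However, the step you flag as ``the main obstacle'' is the actual content of the claim, and your proposal does not close it. The paper's resolution is a second, sharper count, not a maximality or bookkeeping argument: if no non-defining hyperedge incident with $v_1$ (and neither $h_1$ nor $h_{x-1}$, which can be swapped into non-defining roles) contains both $v_{x-1}$ and $v_{y+1}$, then the hyperedges incident with $v_1$ that lie inside $W$ number at most $\binom{t}{r-1}-\binom{t-2}{r-3}$, and together with the at most $t$ defining hyperedges (with $h_1,h_{x-1}$ double counted) this forces $d(v_1)\leq t+\binom{t}{r-1}-\binom{t-2}{r-3}-2\leq\binom{t}{r-1}$ when $r\geq 4$, contradicting $d(v_1)>\binom{t}{r-1}$; the case $r=3$ needs a separate refinement because $\binom{t-2}{r-3}=1$ is too weak, and the paper handles it by pairing each defining $h_j$, $j\geq x$, with a putative hyperedge $\{v_1,v_{x-1},v_{j-1}\}$. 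Your proposed fallback --- that failure of the re-attachment ``again forces a defining hyperedge at $v_1$ to spill outside $W$, returning us to the second alternative'' --- does not work: we are already in the situation where $h_y\not\subseteq W$ yet $v_{y+1}\in W$, so a hyperedge merely spilling outside $W$ does not yield the second alternative (which requires $v_{y+1}\notin W$). Likewise, the extremality of $P$ with respect to $d(v_1)+d(v_n)$ and $\abs{N_{\h\setminus P}(v_1)}+\abs{N_{\h\setminus P}(v_n)}$ plays no role in the paper's argument for this claim and does not obviously substitute for the missing count. So the proposal is a correct outline with a genuine gap at its decisive step.
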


\begin{proof}

There is a defining hyperedge $h_y$ incident with $v_1$, $y\geq x$ since $d(v_1)>\binom{t}{r-1}$ and  $\abs{N_{\h_1}(v_1)}=t$. Note that if $t\geq \floor{\frac{n-1}{2}}$ and $n$ are even then there are at least two such hyperedges and we may choose one with the minimum index. Hence either $v_{y+1} \notin N_{\h_1}[S_1] $ and we are done or $v_{y+1} \in N_{\h_1}[S_1]$ and we need to show that $v_x \in T_1$. 

If $x=1$, then there is nothing to prove. If $x=2$, then alter the path $P$ with Permutation~$3$, for defining hyperedge $h_y$ and non-defining hyperedge $h$ incident with $v_1$ and $v_{y+1}$. In particular it  removes $v_1,h_1$ from $P$ and replaces $h_y$ with $h_y, v_1,h$. Hence $v_2 \in T_1$ and $x=2$ case is also settled. 

From here we settle the final case when $x\geq 3$. 
If there is a $P$ non-defining hyperedge $f$ incident with $v_{x-1}$ and $v_{y+1}$, then altering $P$ with Permutation~$3$ for defining hyperedge $h_y$ and non-defining hyperedge $f$, we get $v_x\in T_1$, and we are done. 
Even more if $h_{x-1}$ is incident with $v_{x-1}$ and $v_{y+1}$ with the same way we get $v_x\in T_1$ and we are done. Thus we may assume that none of the $P$ non-defining hyperedges (and $h_{x-1}$) are incident with $v_{x-1}$ and $v_{y+1}$.
Note that, since $v_2$ is incident with $v_1$ with a non-defining hyperedge, hyperedge $h_1$ can also be treated as a non-defining hyperedge. Thus either we are done  $h_{1}$ is not incident with both $v_{x-1}$ and $v_{y+1}$.
Recall $N_{\h\setminus P}(v_1)=N_{\h_1}[S_1]\setminus \{v_1\}=t$, thus each of $h_1,h_2\dots, h_{x-1}$ is a subset of $N_{\h\setminus P}[v_1]$.
Hence all non-defining hyperedges incident with $v_1$, and the hyperedges  $h_1$ and  $h_{x-1}$ are subsets of $N_{\h\setminus P}[v_1]$. Even more, none of them is incident with both  $v_{x-1}$ and $v_{y+1}$. Hence the number of them is upper bounded by $\binom{\abs{T_1}}{r-1}- \binom{\abs{T_1}-2}{r-3}$. The number of defining hyperedges incident with $v_1$ is upper bounded by $t$. Note that $h_1$ and $h_{x-1}$ are double counted. Finally if $r\geq 4$ we have a contradiction 
\[
d(v_1)\leq t + \binom{t}{r-1}- \binom{t-2}{r-3}-2\leq \binom{t}{r-1}.
\]
In case $r=3$ the maximum number of hyperedges incident with $v_1$ is $\binom{\abs{N_{\h\setminus P}(v_1)}}{r-1}$. 
We have already seen that all the non-defining hyperedges and all $h_j$, $j<x$ are subsets of $N_{\h \setminus P}[v_1]$. 
For each defining hyperedge $h_j$, $j\geq x$, if there is a hyperedge $\{v_1,v_{x-1},v_{j-1}\}$ then with Permutation~$3$, $v_x$ is a terminal vertex. Otherwise if there is no such hyperedge, $\{v_1,v_{x-1},v_{j-1}\}$ for each $j$ then we have $d(v_1)\leq \binom{\abs{N_{\h\setminus P}(v_1)}}{r-1}=\binom{t}{r-1}$, a contradiction.
\end{proof}

By Claim~\ref{claim:neighborhood_size_of_v_1} and Claim~\ref{Claim:T_1_size} it is straightforward to deduce the following corollary.

\begin{corollary}\label{cor:T_1_r>3}
    If $k\leq \floor{\frac{n-1}{2}}+1$ and $r>3$, then $\abs{T_1}>t$.
\end{corollary}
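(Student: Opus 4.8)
The plan is to split the argument according to the size of $N_{\h_1}[S_1]$, using Claim~\ref{claim:neighborhood_size_of_v_1} to obtain a clean lower bound on this size and Claim~\ref{Claim:T_1_size} to handle the one tight case that the bound cannot settle by itself. The whole point is that $T_1$ already contains $N_{\h_1}[S_1]^-$ by Claim~\ref{Claim:basic}, so I only ever need to produce one extra terminal vertex lying off that left shift.

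First I would note that since $r>3$ we have $\mathbbm{1}_{r\geq 4}=1$, so the hypothesis $k\leq\floor{\frac{n-1}{2}}+1$ is exactly the hypothesis of Claim~\ref{claim:neighborhood_size_of_v_1}. The exceptional alternative in that claim requires $r=3$, hence it cannot occur here, and I conclude $\abs{N_{\h\setminus P}(v_1)}\geq t$, i.e.\ $\abs{N_{\h\setminus P}[v_1]}\geq t+1$. Because $v_1\in S_1$ and $\h\setminus P\subseteq\h_1$, we get $N_{\h\setminus P}[v_1]\subseteq N_{\h_1}[S_1]$ and therefore $\abs{N_{\h_1}[S_1]}\geq t+1$. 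Since $v_1\in N_{\h_1}[S_1]$, the remark on left shifts gives $\abs{N_{\h_1}[S_1]^-}=\abs{N_{\h_1}[S_1]}-1$, and Claim~\ref{Claim:basic} supplies $N_{\h_1}[S_1]^-\subseteq T_1$.

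If $\abs{N_{\h_1}[S_1]}\geq t+2$, then $\abs{T_1}\geq\abs{N_{\h_1}[S_1]^-}\geq t+1>t$ and there is nothing more to do. The remaining case is $\abs{N_{\h_1}[S_1]}=t+1$, where $\abs{N_{\h_1}[S_1]^-}=t$, so it suffices to exhibit a single vertex of $T_1$ outside $N_{\h_1}[S_1]^-$. Here I would combine $\abs{N_{\h\setminus P}[v_1]}\geq t+1$ with $\abs{N_{\h_1}[S_1]}=t+1$ and the containment $N_{\h\setminus P}[v_1]\subseteq N_{\h_1}[S_1]$ to force equality $N_{\h_1}[S_1]=N_{\h\setminus P}[v_1]$ and $\abs{N_{\h\setminus P}[v_1]}=t+1$; this is precisely the hypothesis of Claim~\ref{Claim:T_1_size}. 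Applying it, either $v_x\in T_1$, or $v_y\in T_1$ for a hyperedge $h_y\ni v_1$ with $v_{y+1}\notin N_{\h_1}[S_1]$. In the first branch, the maximality in the definition of $x$ gives $v_{x+1}\notin N_{\h\setminus P}[v_1]=N_{\h_1}[S_1]$, so $v_x\notin N_{\h_1}[S_1]^-$; in the second, $v_{y+1}\notin N_{\h_1}[S_1]$ directly yields $v_y\notin N_{\h_1}[S_1]^-$. Either way $T_1$ meets the complement of $N_{\h_1}[S_1]^-$, so $\abs{T_1}\geq t+1>t$, as required.

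The step that needs care, and which I expect to be the only real obstacle, is the identification $N_{\h_1}[S_1]=N_{\h\setminus P}[v_1]$ in the tight case: this is what licenses reading off $v_{x+1}\notin N_{\h_1}[S_1]$ from the definition of $x$, which is phrased through $N_{\h\setminus P}[v_1]$ rather than through $N_{\h_1}[S_1]$. Without this equality the two membership tests $v_x\notin N_{\h_1}[S_1]^-$ and $v_y\notin N_{\h_1}[S_1]^-$ would not both be justified, and everything downstream would collapse; once the equality is in hand the corollary is immediate.
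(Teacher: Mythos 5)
Your argument is correct and is exactly the deduction the paper has in mind: it states the corollary as a straightforward consequence of Claim~\ref{claim:neighborhood_size_of_v_1} and Claim~\ref{Claim:T_1_size} without writing out the details, and you supply precisely those details, including the one genuinely delicate point that the tight case $\abs{N_{\h_1}[S_1]}=t+1$ forces $N_{\h_1}[S_1]=N_{\h\setminus P}[v_1]$, which is what makes both membership tests $v_x\notin N_{\h_1}[S_1]^-$ and $v_y\notin N_{\h_1}[S_1]^-$ legitimate. No gaps.
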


\begin{claim}\label{Claim:T_1_size_3-uniform}
Let $r=3$, $k=t \leq \frac{n-1}{2}$ and $\abs{N_{\h_1}[S_1]}=t$. Then $v_{\gamma}\in T_1$, $v_1\notin h_{\gamma}$ and $v_{\gamma+1} \notin N_{\h\setminus P}(v_1)$ for some $\gamma \in \{4,5\}$. 
\end{claim}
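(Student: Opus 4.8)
The plan is to convert the rigidity forced by the hypotheses into a short rotation argument. Since $r=3$, $k=t$ and $\abs{N_{\h_1}[S_1]}=t$ place us in the residual case of Claim~\ref{claim:neighborhood_size_of_v_1}, I first unpack the structure. Writing $M:=N_{\h\setminus P}(v_1)$, we have $\abs{M}=t-1$, every pair of vertices of $M$ spans a non-defining hyperedge together with $v_1$ (so all $\binom{t-1}{2}$ such triples are present), and $v_1$ lies in exactly $t$ defining hyperedges; consequently $N_{\h_1}[S_1]=\{v_1\}\cup M$, whence $v_2,\dots,v_x\in M$ while $v_{x+1}\notin M$. The left shift of Claim~\ref{Claim:basic} applied to $v_2,\dots,v_x\in M$ already gives $v_1,\dots,v_{x-1}\in T_1$, and Permutation~1 applied to each defining hyperedge through $v_1$ contributes the corresponding vertex $v_{x_j}$ to $T_1$. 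Membership of $v_x$ itself is the subject of Claim~\ref{Claim:T_1_size}, so the genuinely new content here is the \emph{edge} condition $v_1\notin h_\gamma$.

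Next I would pin down $\gamma$. The requirement $v_{\gamma+1}\notin M$ together with $v_2,\dots,v_x\in M$ forces $\gamma\ge x$; taking $\gamma$ at the prefix boundary (so that $v_{\gamma+1}=v_{x+1}\notin M$ is free from the maximality of $x$) reduces the statement to showing that this boundary position is $4$ or $5$ and carries a qualifying terminal vertex. For each small value of $x$ I would realize $v_\gamma\in T_1$ by a single application of Permutation~3: because $M$ is complete with $v_1$, for any defining hyperedge $h_i\ni v_1$ with $i\ge x$ whose successor satisfies $v_{i+1}\in M$, and any earlier neighbour $v_j\in M$, the triple $\{v_1,v_j,v_{i+1}\}$ is a non-defining hyperedge — exactly the rerouting hyperedge that Permutation~3 needs to make $v_{j+1}$ an endpoint. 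Choosing $j=\gamma-1$ then installs a terminal vertex at position $\gamma$ without touching $h_\gamma$, while the escape $v_{\gamma+1}\notin M$ is immediate.

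The main obstacle is the edge condition $v_1\notin h_\gamma$: if the defining hyperedges at positions $4$ and $5$ both contained $v_1$, neither choice would qualify, so this configuration must be excluded. I would rule it out using the extremal choice of $P$. If, say, $h_4=\{v_1,v_4,v_5\}$ and $h_5=\{v_1,v_5,v_6\}$ were both present, then rotating $P$ through one of them (a Permutation~1 step followed by reinsertion of a freed non-defining triple of $M$) yields a Hamiltonian Berge path whose new endpoint has degree at most $d(v_1)$ yet strictly larger non-defining neighbourhood, contradicting the maximality of $\abs{N_{\h\setminus P}(v_1)}+\abs{N_{\h\setminus P}(v_n)}$. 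This forces at least one of $h_4,h_5$ to avoid $v_1$ and pins down $\gamma\in\{4,5\}$. The delicate, most error-prone part is the bookkeeping in each small case: verifying that the hyperedge invoked by Permutation~3 to witness $v_\gamma\in T_1$ is genuinely distinct from $h_\gamma$ and from the defining hyperedges already committed along the rerouted path, so that the three conclusions hold simultaneously.
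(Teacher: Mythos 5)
Your opening paragraph agrees with the paper's: from $k=t$ and $\abs{N_{\h_1}[S_1]}=t$ one forces $d(v_1)=\binom{t-1}{2}+t$, every pair of $M:=N_{\h\setminus P}(v_1)$ spans a non-defining triple with $v_1$, and no defining hyperedge through $v_1$ lies inside $N_{\h\setminus P}[v_1]$; the rotation mechanism you invoke (Permutation~3 with a triple $\{v_1,v_{\gamma-1},v_{i+1}\}$) is also the one the paper uses. The gap is in how you locate $\gamma$. You take $\gamma$ at the prefix boundary $x$ so that $v_{\gamma+1}=v_{x+1}\notin M$ comes for free, and you defer to ``showing that this boundary position is $4$ or $5$.'' No such argument exists: the paper proves $v_2\notin M$ (if $v_2\in M$, swapping $h_1$ for a non-defining triple on $\{v_1,v_2\}$ keeps the terminal vertices but strictly increases $\abs{N_{\h\setminus P}(v_1)}$, contradicting the extremal choice of $P$), so $x=1$ and the prefix boundary sits at position $1$, never at $4$ or $5$. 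In the paper the escape condition $v_{\gamma+1}\notin M$ is certified quite differently, via $T_1=\{v_i:v_1\in h_i\}$ (forced by the degree count against the theorem's conditions), which gives $M\subseteq\{v_{i+1}:v_1\in h_i\}$, so that $v_1\notin h_\gamma$ already implies $v_{\gamma+1}\notin M$.

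Two further steps that your plan needs are missing for the same reason. First, your rotation for $\gamma=4$ requires $v_3\in M$, which you would read off from ``$v_2,\dots,v_x\in M$''; since $x=1$ this is unavailable. The paper obtains $v_3\in M$ by first rotating to place $v_2$ in $T_1$, deducing $v_1\in h_2$ from $T_1=\{v_i:v_1\in h_i\}$, and hence $v_3\in\{v_{i+1}:v_1\in h_i\}\setminus\{v_2\}=M$. Second, your exclusion of the configuration $v_1\in h_4$ and $v_1\in h_5$ (``a Permutation~1 step followed by reinsertion of a freed non-defining triple of $M$ enlarges the non-defining neighbourhood'') is not an argument one can check. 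The paper instead takes $j$ minimal with $j>3$ and $v_1\in h_j$: if $j\neq 4$ then $\gamma=4$ works outright, and if $j=4$ then $v_1\in h_5$ would force $h_5=\{v_1,v_5,v_6\}\subseteq N_{\h\setminus P}[v_1]$, contradicting the fact that defining hyperedges through $v_1$ are not subsets of $N_{\h\setminus P}[v_1]$, so $\gamma=5$ works. These omitted steps are the substance of the claim, so the proposal as written is not a proof.
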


\begin{proof}
 Recall the degree condition of $v_1$, $d(v_1)\geq \binom{t}{2}+1= \binom{t-1}{2}+t$. On the other hand $d(v_1)\leq \binom{t-1}{2}+t$, where $\binom{t-1}{2}$ bounds the non-defining hyperedges incident with $v_1$ and $t$ bounds defining hyperedges incident with $v_1$. 
 Therefore, $d(v_1)= \binom{t-1}{2}+t$ and all defining hyperedges incident with $v_1$ are not subsets of $N_{\h\setminus P}[v_1]$ and for each pair of vertices in $N_{\h\setminus P}(v_1)$ there is a non-defining hyperedge with $v_1$. 
 By Claim~\ref{Claim:basic} and the statement of Theorem~\ref{Theorem_Posa_r_Uniform} we have $T_1=\{v_i:v_1\in h_i\}$ thus  $N_{\h\setminus P}[v_1]= \{v_{i+1}:v_1\in h_i\}\setminus {u}$, for some vertex $u$.   
 Note that $v_2\notin N_{\h\setminus P}(v_1)$, otherwise by replacing $h_1$ with a non-defining hyperedge  incident with $v_1$ and $v_2$, resulting in another Hamiltonian Berge path $P'$ with the same terminal vertices. Since $h_1$ is not subset of  $N_{\h\setminus P}[v_1]$ we have $\abs{N_{\h\setminus P'}(v_1)}= t$, a contradiction to the assumption that $P$ maximizes $\abs{N_{\h\setminus P}(v_1)}+\abs{N_{\h\setminus P}(v_n)}$. 
 Hence we have $N_{\h\setminus P}[v_1]= \{v_i:v_1\in h_i\}\setminus {v_2}$. 
By altering $P$ with Permutation~$3$ for vertex $v_1$ and any defining hyperedge  incident with $v_1$ distinct from $h_1$ we get $v_2\in T_1$ thus $v_{1}\in h_2$ and $v_3\in N_{\h\setminus P}[v_1]$. 

Let $j$ be the minimal integer such that $j>3$ and $v_1 \in h_j$. Thus $v_3,v_{j+1}\in N_{\h\setminus P}[v_1]$ and there is a non-defining hyperedge $\{v_1,v_3,v_{j+1}\}$.
Then by altering $P$ with Permutation~$3$ for the hyperedge $h_j$ and $\{v_1,v_3,v_{j+1}\}$ the vertex $v_4\in T_1$. If $j\neq 4$, then we are done. If $j=4$, then $h_5$ is not incident with the vertex $v_1$ and similarly as for $v_4$, the vertex  $v_5$ is in $T_1$ thus we are done.
\end{proof}

It is straightforward to conclude the following corollary.
\begin{corollary}\label{Cor:T_1}
Let $r=3$, $k=t \leq \frac{n-1}{2}$ and $\abs{N_{\h_1}[S_1]}=t$ then $\abs{T_1}>t$.
\end{corollary}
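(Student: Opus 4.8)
The plan is to combine the terminal vertices produced by Permutation~$1$ with the single extra vertex furnished by Claim~\ref{Claim:T_1_size_3-uniform}, and then simply count; no new structural work is needed beyond that claim.

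First I would record the $t$ ``defining'' terminal vertices. For every defining hyperedge $h_i$ incident with $v_1$, applying Permutation~$1$ to $P$ with $f_i=h_i$ produces a Hamiltonian Berge path with endpoints $v_i$ and $v_n$, so $v_i\in T_1$. Since $k=t$, there are exactly $t$ indices $i$ with $v_1\in h_i$, and these indices are distinct, so the set $\{v_i:v_1\in h_i\}$ consists of $t$ distinct vertices, all contained in $T_1$. Thus $\abs{T_1}\geq t$ is immediate, and the whole point is to improve this by one.

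Next I would adjoin the vertex guaranteed by the claim. Claim~\ref{Claim:T_1_size_3-uniform} supplies some $\gamma\in\{4,5\}$ with $v_\gamma\in T_1$ and $v_1\notin h_\gamma$. The condition $v_1\notin h_\gamma$ is precisely what distinguishes $v_\gamma$ from the $t$ vertices already found: if $v_\gamma$ coincided with some $v_i$ for which $v_1\in h_i$, then $\gamma=i$ (the defining vertices of $P$ are distinct), whence $v_1\in h_\gamma$, contradicting the claim. Hence $v_\gamma\notin\{v_i:v_1\in h_i\}$, so
\[
T_1\supseteq\{v_i:v_1\in h_i\}\cup\{v_\gamma\}
\]
has at least $t+1$ elements, i.e.\ $\abs{T_1}>t$.

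Since the substantive argument is carried entirely by Claim~\ref{Claim:T_1_size_3-uniform}, I do not anticipate any real obstacle here; the only step requiring a moment of care is checking that $v_\gamma$ has not already been counted among the defining terminal vertices, which the hypothesis $v_1\notin h_\gamma$ settles at once. This is why the corollary can be labeled ``straightforward.''
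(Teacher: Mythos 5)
Your proposal is correct and matches the paper's intent: the paper gives no explicit proof, calling the corollary a straightforward consequence of Claim~\ref{Claim:T_1_size_3-uniform}, and your counting of the $t$ distinct terminal vertices $\{v_i : v_1\in h_i\}$ obtained via Permutation~$1$ together with the additional vertex $v_\gamma$ satisfying $v_1\notin h_\gamma$ is exactly the intended argument. The one step needing care --- that $v_\gamma$ is not already among the $t$ defining terminal vertices --- is handled correctly by your appeal to the distinctness of the defining vertices of $P$.
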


We will show that $t\geq \floor{\frac{n-1}{2}}$. 
First, suppose that $k\geq \floor{\frac{n-1}{2}}$. 
For each defining hyperedge incident to $v_1$ there is a distinct vertex in $T_1$ by Claim~\ref{Claim:basic}. 
We have $\abs{T_1}\geq k\geq \floor{\frac{n-1}{2}}$ by Condition~\ref{Condition:Posa2} of Theorem~\ref{Theorem_Posa_r_Uniform} and the assumption that $v_1$ has the maximum degree among the vertices in $T_1$ we have $t\geq \floor{\frac{n-1}{2}}$.  

If $k< \floor{\frac{n-1}{2}}$, by Corollary~\ref{cor:T_1_r>3} and Corollary~\ref{Cor:T_1} we have $T_1\geq t+1$.
Since $v_1$ has the maximum degree in $T_1$ we get $t\geq \floor{\frac{n-1}{2}}$ by Condition~\ref{Condition:Posa2} of Theorem~\ref{Theorem_Posa_r_Uniform}. Hence we have 
\[
d(v_n)\geq d(v_1)>
\binom{\floor{\frac{n-1}{2}}}{r-1}+\mathbbm{1}_{2|n}.
\]

Let us assume $r\geq 4$. For vertex $v_n$ we  define $S_n:=\{v_n\}\cup\{v_{n},v_{n-1},v_{n-2},\dots,v_{x'+1}\}$. Where $x'$ is the smallest integer such that $\{v_{n},v_{n-1},v_{n-2},\dots,v_{x'}\}\subseteq N_{\h \setminus P}[v_{n}]$. It is simple to observe that  $N_{\h\setminus P}(v_1)^-\cap N_{\h\setminus P}(v_n)$ is empty otherwise, if we have  $v_i\in N_{\h\setminus P}(v_1)^-\cap N_{\h\setminus P}(v_n)$ then we have a Hamiltonian Berge cycle. Indeed, if $v_i$ is in $N_{\h\setminus P}(v_n)$ it means there is a $P$ non-defining hyperedge $f_n$ incident with $v_i$ and $v_n$. Similarly, there is a non-defining hyperedge $f_1$ incident with $v_{i+1} $ and $v_1$. Note that $v_n$ is not incident with $v_1$ with a $P$ non-defining hyperedge, otherwise, we would have a Hamiltonian Berge cycle therefore, $f_1$ and $f_n$ are distinct non-defining hyperedges. The following is a Hamiltonian Berge cycle 
\[
v_1,h_1,v_2,h_2,\dots,v_i,f_n,v_n,h_{n-1},v_{v-1},h_{n-2},\dots,v_{i+1},f_1,v_1
\]
a contradiction. Similarly, we have 
\[
N_{\h_1}[S_1]^-\cap N_{\h_n}[S_n]=\emptyset
\]
where $\h_n:=\h\setminus P \cup \{h_{n-1},h_{n-2}\dots,h_{x'}\}$.  Indeed if $v_i\in N_{\h_1}[S_1]^-\cap N_{\h_n}[S_n]$ then $x\leq i\leq x'$, since $\h$ is not Hamiltonian. Thus if the corresponding incidences with $v_i$ or $v_{i+1}$ come from a defining hyperedge $h_j$ of $P$, then either $j<x$ or $j\geq x'$. 
Without loss of generality let us assume $j<x$, then $v_j,h_{j-1},v_{j-1},\dots v_1,f,v_{j+1},\dots,v_n$ is  a Hamiltonian Berge path  such that $h_j$ is not a defining hyperedge and vertices $v_x,v_{x+1}\dots ,v_{x'}$ have the same position as they had in $P$. Thus we may assume both incidences are from non-defining hyperedges and we have a Hamiltonian Berge cycle in $\h$, a contradiction.

Let us denote the number of defining hyperedges incidents with $v_n$ by $k_n$. We assume  $k,k_n\leq \floor{\frac{n+1}{2}}+\mathbbm{1}_{r\geq 4}$. By Claim~\ref{Claim:T_1_size} either $\abs{N_{\h_1}(v_1)}>\floor{\frac{n+1}{2}}$ or we have a vertex $u$, such that $u\in T_1$ and  $u\notin N_{\h_1}[S_1]^-$ and $\abs{N_{\h_1}(v_1)}=\floor{\frac{n-1}{2}}$. 
Even more, $u$ is a terminal vertex of a Hamiltonian Berge path, after altering $P$ with a non-defining hyperedge not incident with a vertex in $S_n$. 
Therefore,  vertex $u$ is not in $N_{\h_1}[S_1]^-\cup N_{\h_n}[S_n]$. Due to the symmetry of a Berge path, we may apply Claim~\ref{Claim:T_1_size} for vertex $v_n$.  
Therefore, either $\abs{N_{\h_n}[S_n]}>\floor{\frac{n+1}{2}}$ or we have a vertex $u'$ such  that $u' \notin N_{\h_1}[S_1]\cup N_{\h_n}[S_n]^+$.

Note that if $n$ is even we may assume that vertices $u$ and $u'$ are not consecutive in $P$ in this given order. Since while applying Claim~\ref{Claim:T_1_size} we may choose a different defining hyperedge $h_y$ for the vertex $v_1$ and $v_n$.
Finally  we have $N_{\h_1}[S_1]^-\cup N_{\h_n}[S_n]\subseteq \{v_1,v_2,\dots,v_{n}\}$,  $N_{\h_1}[S_1]^-\cap N_{\h_n}[S_n]=\emptyset$ and if there are $\{u\}$, $\{u'\}^-$ then they are not contained in $N_{\h_1}[S_1]^-\cup N_{\h_n}[S_n]$. Recall $\abs{N_{\h_1}[S_1]^-}$ and $\abs{N_{\h_n}[S_n]}$ are both at least $\floor{\frac{n+1}{2}}$ by Claim~\ref{claim:neighborhood_size_of_v_1}. 
We have

\[
n+1= 1+\mathbbm{1}_{2|n}+\left( \floor {\frac{n+1}{2}}-1\right) +\floor {\frac{n+1}{2}}\leq \abs{\{u,u'\}\cup N_{\h_1}[S_1]^-\cup N_{\h_n}[S_n]} \leq \abs{\{v_i:i\in [n]\}}=n, 
\]

a contradiction. Note that calculations are the same if $u$ or $u'$ does not exist.
We have analogous contradictions if $\abs{N_{\h_1}[S_1]}$ or $\abs{N_{\h_n}[S_n]}$ is greater than $\floor{\frac{n-1}{2}}$.  

Let $r>3$ and either $k$ or $k_n$ is larger than $\floor{\frac{n-1}{2}}+1$. 
Note that 
\begin{equation}\label{equation:some}
    \{v_i:v_n \in h_i\}\cap N_{\h\setminus P}(v_1)^-=\emptyset
\end{equation}
since otherwise we have a Hamiltonian Berge cycle.
If exactly one is larger than $\floor{\frac{n-1}{2}}+1$, say $k\leq \floor{\frac{n-1}{2}}+1<k_n$ then we have $N_{\h\setminus P}(v_1)\geq \floor{\frac{n-1}{2}}$ and $\{v_i:v_n \in h_i\}\geq \floor{\frac{n-1}{2}}+2$. We have a similar contradiction since $\{v_i:v_n \in h_i\}\cup N_{\h\setminus P}(v_1)^-\subseteq \{1,2,\dots,n-1\}$ and because of Equation~\ref{equation:some}.  Finally if $ \floor{\frac{n-1}{2}}+1<k\leq k_n$, then 
\[
 \binom{\floor{\frac{n-1}{2}}}{r-1}< d(v_1) \leq \binom{\abs{N_{\h\setminus P}(v_1)}}{r-1}+k \leq \binom{n-k_n}{r-1}+k\leq  \binom{n-k}{r-1}+k.
\]
We have a contradiction similarly as before since $\{v_i:v_n \in h_i\}\cap N_{\h\setminus P}(v_1)^-\not=\emptyset$.
Therefore, if $r\geq 4$, we are done. From here we assume $r=3$.

\begin{claim}\label{claim:hyperedge_r=3}
Let $r=3$ then each defining hyperedge is incident with at most one terminal vertex of~$P$.
\end{claim}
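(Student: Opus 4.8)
The plan is to argue by contradiction. Assume some defining hyperedge $h$ of $P$ is incident with two distinct terminal vertices $v_a,v_b\in T_1$ with $a<b$. Since $r=3$, the hyperedge $h$ has exactly three vertices, so $h=\{v_a,v_b,w\}$ for a third vertex $w$, and as a defining hyperedge $h=h_j$ joins two path-consecutive vertices $v_j,v_{j+1}$. I would first reduce to the position of $\{v_a,v_b\}$ inside $h$: either $\{v_a,v_b\}=\{v_j,v_{j+1}\}$, or one of $v_a,v_b$ is a member of the consecutive pair and the other is the ``extra'' vertex $v_\ell$ with $h=\{v_j,v_{j+1},v_\ell\}$. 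This yields a short list of cases to treat.

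Next, for each of $v_a,v_b$ I would fix a Hamiltonian Berge path obtained from $P$ by Permutations~1--3 that realises it as a terminal vertex, with the other terminal being $v_n$; such paths exist by the definition of $T_1$ and Claim~\ref{Claim:basic}. The idea is then to use $h$ itself as the reconnecting hyperedge. Starting from the path ending at $v_a$, the second terminal $v_b$ is an interior vertex, and because $h$ contains both $v_a$ and $v_b$ I can perform a further rotation at the endpoint $v_a$ through $h$ (Permutation~1 or Permutation~3, according to whether $h$ is currently a defining hyperedge of that path). The purpose of this rotation is that it detaches the hyperedge previously joining $v_b$ to its neighbour, freeing it as a non-defining hyperedge incident with the newly created endpoint.

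From here I would aim for one of two contradictions. If the detached hyperedge is incident with $v_n$, then together with the rotated path it closes a Hamiltonian Berge cycle in $\h$, contradicting that $\h$ is not Hamiltonian; this is exactly the mechanism already used to show $N_{\h\setminus P}(v_1)^-\cap N_{\h\setminus P}(v_n)=\emptyset$. Otherwise the rotation strictly enlarges the set of non-defining hyperedges incident with an endpoint, which I would play against the extremal choice of $P$: recall that $P$ maximises $d(v_1)+d(v_n)$ and then $\abs{N_{\h\setminus P}(v_1)}+\abs{N_{\h\setminus P}(v_n)}$, and that $v_1$ has the largest degree among all vertices of $T_1$. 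Comparing the rotated path with $P$ should then contradict the maximality of $\abs{N_{\h\setminus P}(v_1)}+\abs{N_{\h\setminus P}(v_n)}$.

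I expect the main obstacle to be bookkeeping rather than a single clever step. One must run the rotations while certifying at every stage that the hyperedges used remain pairwise distinct, so that the resulting object is a genuine Berge path or Berge cycle, and one must handle the subcases for the relative order of $a$, $b$ and the location of the extra vertex $w=v_\ell$ inside $h$. A particularly delicate point is that, to invoke the secondary extremal condition, the rerouted path must still realise the maximum endpoint degree sum; verifying this (and otherwise routing the argument through the cycle contradiction instead) is where the care is needed. Separating the subcase where $h$ is already a defining hyperedge of the rotated path from the subcase where it is not is the crux of the case analysis.
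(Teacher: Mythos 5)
There is a genuine gap, and it starts with the reading of the statement. ``Terminal vertex of $P$'' here means an endpoint of the fixed Hamiltonian Berge path $P$, i.e.\ $v_1$ or $v_n$; the claim asserts that no defining hyperedge $h_i$ contains both $v_1$ and $v_n$. You instead take the two vertices to be arbitrary elements of $T_1$, which is a much stronger assertion and is in general false: $h_1$ contains $v_1$ and $v_2$, and $v_2$ very often lies in $T_1$ (the earlier claims repeatedly place $v_2$ there). Under the correct reading the case analysis collapses: for $1<i<n-1$ the hyperedge $h_i$ already contains $v_i$ and $v_{i+1}$, and since $r=3$ it has room for at most one further vertex, so it cannot contain both $v_1$ and $v_n$. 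The only nontrivial cases are $v_n\in h_1$ and $v_1\in h_{n-1}$, and your proposal never isolates them.

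Even setting the misreading aside, the mechanism you propose does not close either case. Your second contradiction --- that a rotation ``strictly enlarges the set of non-defining hyperedges incident with an endpoint'' and hence violates the extremal choice of $P$ --- is not substantiated: converting a defining hyperedge into a non-defining one changes neither $d(v_1)+d(v_n)$ nor, by itself, $\abs{N_{\h\setminus P}(v_1)}+\abs{N_{\h\setminus P}(v_n)}$, and the rotated path need not even end at $v_1$ or $v_n$, so the extremal comparison is not available. The paper's actual argument in the case $v_n\in h_1$ is of a different nature: the sequence $v_n,h_1,v_2,h_2,\dots,h_{n-1},v_n$ is a Berge cycle on the $n-1$ vertices other than $v_1$, and since $v_1$ cannot be inserted between two consecutive cycle vertices without producing a Hamiltonian Berge cycle, its neighbours on that cycle form a non-consecutive set; this forces $d(v_1)\leq \binom{\floor{\frac{n-1}{2}}}{2}+\mathbbm{1}_{2|n}$, contradicting the lower bound $d(v_1)>\binom{\floor{\frac{n-1}{2}}}{2}+\mathbbm{1}_{2|n}$ established earlier in the proof. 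That degree-counting step, which is the entire content of the claim, is absent from your proposal.
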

\begin{proof}
Every defining hyperedge $h_i$, $1<i<n-1$, is incident with at most one terminal vertex of $P$, since $r=3$. 
If $v_n\in h_1$ or $v_1 \in h_{n-1}$, then there is a Berge cycle containing all vertices but $v_1$ or $v_n$. Let us assume $v_n\in h_1$ thus we have a Berge cycle containing all vertices but $v_1$, namely
\[
v_n,h_1,v_2,h_2,\dots, h_{n-1},v_n
\]
If there are consecutive vertices $v_i,v_{i+1}$ incident with $v_1$ and there is a hyperedge $\{v_i,v_{i+1},v_1\}$ in $\h$ we may assume $h_i=\{v_i,v_{i+1},v_1\}$, hence it is a defining hyperedge. Therefore, the vertices incident with $v_1$ with a non-defining hyperedge are not consecutive on the cycle otherwise we have a Hamiltonian Berge cycle. 
Even more, if a defining hyperedge $h_i$ is incident with $v_1$ then $v_i$, $v_{i+1}$ are not incident with $v_1$ with a non-defining hyperedge. Therefore, the maximum degree of $v_1$ is at most $\binom{\frac{n-1}{2}}{2}+\mathbbm{1}_{2|n}$ a contradiction. Therefore, we have all hyperedge of $\h$ incident with at most one terminal vertex of $P$.
\end{proof}

From the last claim we have $(N_{\h_1}[S_1]\cup \{v_i|v_1\in h_i\})^-\cap (N_{\h_n}[S_n]\cup \{v_{i}|v_n\in h_i\})=\emptyset$ since there is no Hamiltonian Berge cycle.  Also by the same claim we have $k$ or $k_n$ is less than  $\floor{\frac{n-1}{2}}$. If we consider the set $N_{\h_1}[S_1]\cup \{v_i|v_1\in h_i\}$ instead of $N_{\h_1}[S_1]$ and the set $N_{\h_n}[S_n]\cup \{v_{i}|v_n\in h_i\}$ instead of $N_{\h_n}[S_n]$ we get a contradiction in with the same argument as for $r\geq 4$. 
\end{proof}

\subsection{Proof of the theorem for non-uniform hypergraphs}

\begin{proof}[Proof of Theorem~\ref{Theorem_Posa_r_Uniform}]
Here we re-start proof for non-uniform hypergraphs.  For some integer $t$, such that $t>1$, we have
 \[
 2^{t}<d(v_1)\leq  2^{t+1}.
\]

\begin{claim}\label{claim:neighborhood_size_of_v_1_NON-uniform}
 We have $\abs{N_{\h\setminus P}(v_1)}\geq t$.

\end{claim}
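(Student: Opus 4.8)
The plan is to mirror the counting in Claim~\ref{claim:neighborhood_size_of_v_1}, but to exploit that in the non-uniform setting a non-defining hyperedge through $v_1$ may be \emph{any} subset of $N_{\h\setminus P}[v_1]$ containing $v_1$, so the number of such hyperedges is at most $2^{\abs{N_{\h\setminus P}(v_1)}}$ rather than a binomial coefficient. Because $2^{x}$ grows so much faster than $\binom{x}{r-1}$, the delicate boundary cases of the uniform proof (the $r=3$ exceptions) disappear and the bound comes out cleanly.

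First I would split the hyperedges incident with $v_1$ into the $k$ defining hyperedges of $P$ and the remaining non-defining ones. Every non-defining hyperedge $f$ with $v_1\in f$ lies inside $N_{\h\setminus P}[v_1]$, since each vertex of $f$ other than $v_1$ is a neighbour of $v_1$ in $\h\setminus P$; hence there are at most $2^{\abs{N_{\h\setminus P}(v_1)}}$ of them. Writing $s:=\abs{N_{\h\setminus P}(v_1)}$, this yields
\[
d(v_1)\le 2^{s}+k .
\]
Applying Permutation~$1$ to each defining hyperedge through $v_1$ produces $k$ distinct terminal vertices, so $k\le \abs{T_1}$, and recall that $v_1$ has the maximum degree among the vertices of $T_1$.

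Next I would assume for contradiction that $s\le t-1$. Then $2^{t}<d(v_1)\le 2^{t-1}+k$ forces $k>2^{t-1}$. Since all $\abs{T_1}$ vertices of $T_1$ have degree at most $d(v_1)\le 2^{t+1}$, we have $d_{\abs{T_1}}\le d(v_1)$. If $\abs{T_1}\le\floor{\frac{n-1}{2}}$, then Condition~\ref{Condition:PosaNON-uniform_1} gives $2^{\abs{T_1}}<d_{\abs{T_1}}\le 2^{t+1}$, so $\abs{T_1}\le t$ and therefore $k\le\abs{T_1}\le t\le 2^{t-1}$, contradicting $k>2^{t-1}$. If instead $\abs{T_1}>\floor{\frac{n-1}{2}}$, then the same condition at $i=\floor{\frac{n-1}{2}}$ gives $d(v_1)>2^{\floor{\frac{n-1}{2}}}$, whence $t\ge\floor{\frac{n-1}{2}}$; but then $k>2^{t-1}\ge 2^{\floor{\frac{n-1}{2}}-1}$, which exceeds $n-1$ for $n>40$ and so cannot bound the number of defining hyperedges, a contradiction. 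Hence $s\ge t$, as claimed.

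The only genuine obstacle is the second subcase, where $T_1$ is allowed to be larger than $\floor{\frac{n-1}{2}}$ and the degree-sequence hypothesis no longer controls $\abs{T_1}$ directly. The resolution is that this regime forces $t$ to be of order $n/2$, and then the induced lower bound $k>2^{t-1}$ on the number of \emph{defining} hyperedges through $v_1$ is absurd: there are only $n-1$ defining hyperedges in total, while $2^{\floor{(n-1)/2}-1}\gg n$ once $n>40$. This is precisely where the hypothesis $n>40$ enters.
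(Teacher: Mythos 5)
Your proposal is correct and follows essentially the same route as the paper: bound the non-defining hyperedges through $v_1$ by $2^{\abs{N_{\h\setminus P}(v_1)}}$, bound $k$ via $k\le\abs{T_1}$ together with Condition~\ref{Condition:PosaNON-uniform_1} and the maximality of $d(v_1)$ in $T_1$, and in the remaining regime use that $k\le n-1\ll 2^{t-1}$. The only difference is cosmetic — you argue by contradiction from $\abs{N_{\h\setminus P}(v_1)}\le t-1$ and split on $\abs{T_1}$ versus $\floor{\frac{n-1}{2}}$, where the paper splits on $k\le t$ versus $t\ge\floor{\frac{n-1}{2}}$.
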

\begin{proof}
We may upper bound the number  of $P$ non-defining  hyperedges incident with $v_1$ with $ 2^{\abs{N_{\h\setminus P}(v_1)}}$. Thus we have
\begin{equation}\label{Equation:neighborhood_size_of_v_1_NON-uniform}
 2^{\abs{N_{\h\setminus P}(v_1)}}+k\geq d(v_1)>   2^{t}= 2^{t-1}+ 2^{t-1}.
\end{equation}
For each defining hyperedge incident with $v_1$ there are distinct terminal vertices in $T_1$ hence either $k\leq t$ or $t\geq \floor{\frac{n-1}{2}}$ by Condition~\ref{Condition:PosaNON-uniform_1}. 
If $k\leq t$ then because of Equation~\ref{Equation:neighborhood_size_of_v_1_NON-uniform} we have $2^{\abs{N_{\h\setminus P}(v_1)}}>   2^{t-1}$, since $t>1$ and we are done.

Note that we have $k\leq n-1\leq 2^{\floor{\frac{n-1}{2}}-1}$ for all $n>8$. 
Hence if $t\geq \floor{\frac{n-1}{2}}$ then we are done because of Equation~\ref{Equation:neighborhood_size_of_v_1_NON-uniform}.

\end{proof}

Recall that  $x$ denotes the largest integer for which $\{v_1,v_2,\dots,v_x\}\subseteq N_{\h \setminus P}[v_{1}]$.
\begin{claim}\label{Claim:T_1_size_NON-uniform}
If $\abs{N_{\h_1}[S_1]}=t+1$  then $v_x\in T_1$ or $v_y \in T_1$ for some $y$, where $v_{y+1} \notin N_{\h_1}[S_1] $ and $v_1\in h_y$.

\end{claim}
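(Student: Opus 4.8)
The plan is to follow the uniform argument of Claim~\ref{Claim:T_1_size} line for line in its combinatorial structure, replacing every binomial count $\binom{t}{r-1}$ by the power $2^{t}$ appropriate to the non-uniform setting. First I would extract from the hypothesis $\abs{N_{\h_1}[S_1]}=t+1$ that $\abs{N_{\h_1}(v_1)}=t$: since $v_1\in S_1$ we have $N_{\h_1}[v_1]\subseteq N_{\h_1}[S_1]$, while Claim~\ref{claim:neighborhood_size_of_v_1_NON-uniform} together with $\h\setminus P\subseteq\h_1$ forces $\abs{N_{\h_1}(v_1)}\geq t$. Every hyperedge of $\h_1$ incident with $v_1$ lies inside the $(t+1)$-element set $N_{\h_1}[v_1]$ and contains $v_1$, so there are at most $2^{t}$ of them. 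Since $d(v_1)>2^{t}$ and $\h_1=(\h\setminus P)\cup\{h_1,\dots,h_{x-1}\}$, some hyperedge of $\h$ incident with $v_1$ lies outside $\h_1$; by Claim~\ref{Claim:basic} a non-defining hyperedge at $v_1$ stays inside $N_{\h_1}[v_1]$, hence this hyperedge is a defining $h_y$ with $y\geq x$. When $t\geq\floor{\frac{n-1}{2}}$ and $n$ is even I would, exactly as in the uniform case, secure two such hyperedges and take the one of minimum index.

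If $v_{y+1}\notin N_{\h_1}[S_1]$ then $v_y\in T_1$ by Claim~\ref{Claim:basic} (using $v_1\in h_y$ with $1<x\le y$), and the claim holds with this $y$; so I assume $v_{y+1}\in N_{\h_1}[S_1]$ and aim to place $v_x$ in $T_1$. The cases $x=1$ and $x=2$ are identical to the uniform proof: for $x=2$ I apply Permutation~$3$ to the defining hyperedge $h_y$ and a non-defining hyperedge incident with $v_1$ and $v_{y+1}$, which deletes $v_1,h_1$ and reinserts $v_1$ after $h_y$, exhibiting $v_2$ as a terminal vertex.

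For the remaining case $x\geq 3$ the strategy is unchanged: if there is a hyperedge incident with both $v_{x-1}$ and $v_{y+1}$ that may serve as a non-defining hyperedge — either a genuine non-defining hyperedge, or $h_{x-1}$, or $h_1$ (legitimate since $v_1$ and $v_2$ are joined by a non-defining hyperedge, so $h_1$ may be freed) — then Permutation~$3$ applied to $h_y$ and this hyperedge yields $v_x\in T_1$. Otherwise no hyperedge at $v_1$ contained in $N_{\h_1}[v_1]$ meets both $v_{x-1}$ and $v_{y+1}$, and I would count: the hyperedges at $v_1$ inside the $(t+1)$-set that do not contain both $v_{x-1}$ and $v_{y+1}$ number at most $2^{t}-2^{t-2}$, the defining hyperedges incident with $v_1$ number at most $t$ by Claim~\ref{claim:neighborhood_size_of_v_1_NON-uniform}, and $h_1,h_{x-1}$ are double-counted, giving
\[
d(v_1)\leq t+2^{t}-2^{t-2}-2\leq 2^{t},
\]
which contradicts $d(v_1)>2^{t}$.

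The main obstacle I anticipate is the bookkeeping in this final case rather than the arithmetic: I must verify that $h_1$ and $h_{x-1}$ can genuinely be reassigned as non-defining hyperedges without disturbing the position of $v_x$, and that the term $2^{t}-2^{t-2}$ removes exactly the hyperedges meeting both $v_{x-1}$ and $v_{y+1}$ while the defining-edge count $t$ reintroduces only the two double-counted edges. The closing inequality reduces to $t\leq 2^{t-2}+2$, valid for every $t\geq 2$ (and $t>1$ holds throughout this part), so the numerics are robust; the delicate point is the combinatorial justification that the two families of hyperedges overlap in precisely $h_1$ and $h_{x-1}$.
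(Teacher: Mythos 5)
Your proposal follows the same skeleton as the paper's proof of Claim~\ref{Claim:T_1_size_NON-uniform} (which is itself modeled on the uniform Claim~\ref{Claim:T_1_size}): derive $\abs{N_{\h\setminus P}(v_1)}=t$, locate a defining $h_y$ with $v_1\in h_y$ and $y\geq x$, dispose of $x\leq 2$ by Permutation~$3$, and for $x\geq 3$ reach a contradiction by counting hyperedges at $v_1$ that avoid containing both $v_{x-1}$ and $v_{y+1}$. The one structural difference is that the paper splits $x\geq 3$ into the sub-cases $t>3$ and $t=x=3$, only freeing $h_1$ in the latter, whereas you import the $h_1$/$h_{x-1}$-freeing device from the uniform proof and run a single unified count. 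That merge is legitimate (using $h_{x-1}$ as the hyperedge $f$ in Permutation~$3$ with $j=x-1$ is fine, since $h_{x-1}$ drops out of the defining set of the rearranged path) and arguably cleaner.

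Two points need repair. First, your bound ``the defining hyperedges incident with $v_1$ number at most $t$ by Claim~\ref{claim:neighborhood_size_of_v_1_NON-uniform}'' is not what that claim says: it only asserts $\abs{N_{\h\setminus P}(v_1)}\geq t$, and $k\leq t$ genuinely can fail when $t\geq\floor{\frac{n-1}{2}}$. You must add the paper's observation that in that regime $k\leq n-1\leq 2^{\floor{\frac{n-1}{2}}-2}\leq 2^{t-2}$ for $n>40$, so that $k\leq\max\bigl(t,2^{t-2}\bigr)$ in all cases and the final inequality survives. Second, the ``$-2$'' for double counting assumes both $h_1$ and $h_{x-1}$ are simultaneously counted among the $2^{t}-2^{t-2}$ subsets of $N_{\h\setminus P}[v_1]$ containing $v_1$ and among the $k$ defining hyperedges at $v_1$; this holds for $h_1$ but not necessarily for $h_{x-1}$, which need not contain $v_1$ at all (in which case it is counted in neither family and there is nothing to subtract). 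Only ``$-1$'' is guaranteed, but that still gives $d(v_1)\leq t+2^{t}-2^{t-2}-1\leq 2^{t}$ since $t\leq 2^{t-2}+1$ for the relevant range $t\geq 3$ (note $t\geq x\geq 3$ here, as $v_2,\dots,v_x,v_{y+1}$ are distinct elements of $N_{\h\setminus P}(v_1)$), so the contradiction with $d(v_1)>2^{t}$ stands once both fixes are made.
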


\begin{proof}
By Claim~\ref{claim:neighborhood_size_of_v_1_NON-uniform}, we have  $\abs{N_{\h\setminus P}(v_1)}\geq t$, hence we have $\abs{N_{\h\setminus P}(v_1)}= t$ and $N_{\h\setminus P}(v_1)=N_{\h_1}[S_1]\setminus \{v_1\}$, since  $N_{\h\setminus P}(v_1)\subseteq N_{\h_1}[S_1]$.  There is a defining hyperedge $h_y$ incident with $v_1$, $y\geq x$, since $d(v_1)> 2^{t}$ and  $\abs{N_{\h_1}(v_1)}=t$. Note that if $t\geq \floor{\frac{n-1}{2}}$ and $n$ is even then there are two such hyperedges. Since we are going to use this Claim for $v_1$ and $v_n$ also we will choose different hyperedges for them. Either $v_{y+1} \notin N_{\h_1}[S_1] $ and we are done by Claim~\ref{Claim:basic}, or $v_{y+1} \in N_{\h_1}[S_1] $, in this case we will show that $v_x \in T_1$. 

If $x=1$, then there is nothing to prove. If $x=2$, then after altering the path $P$ with Permutation~$3$, for a defining hyperedge $h_y$ and non-defining hyperedge $h$ incident with $v_1$ and $v_{y+1}$, $v_2$ will be a terminal vertex of the longest path. 
Hence we have $v_x \in T_1$ if $x$ is two.

If $x \geq 3$, then we have $t\geq x\geq 3$ and we distinguish two cases. At first, we consider $t>3$ and then $t=x=3$.

If $x\geq 3$ and $t>3$, we consider two cases. Note that $v_{x-1}$ and  $v_{y+1}$ are elements of $N_{\h\setminus P}(v_1)$. If there is a non-defining hyperedge incident with vertices $v_{x-1}$ and $v_{y+1}$ then $v_x$ is a terminal vertex after altering $P$ with Permutation~$3$ and we are done. 
Otherwise the number of non-defining hyperedges incident with $v_1$ is at most  $2^t-2^{t-2}$. Note that either  $k\leq t$ or $k>t$. If $k>t$ then  $t\geq  \floor{\frac{n-1}{2}}$ and $k\leq n-1\leq 2^{ \floor{\frac{n-1}{2}}-2}\leq 2^{t-2}$ since $n\geq 13$. If $k\leq t$,  then $k\leq t \leq 2^{t-2}$ since $t>3$. Finally we can upper bound the degree of $v_1$, $ 2^t<d(v_1)\leq k +  2^{t}-  2^{t-2}\leq 2^t,$
a contradiction.

Here we consider the final case when $x=t=3$.  If there is a non-defining hyperedge $h$ incident with $v_2$ and $v_{y+1}$, then $v_3$ is a terminal vertex after altering the path $P$ with Permutation~$3$, for a defining hyperedge $h_y$ and non-defining hyperedge $h$. 
Note that, if $v_2$, $v_{y+1}\in h_1$   we can replace $h_1$ with some a non-defining hyperedge incident with $v_1$ and $v_2$ since $v_2\in N_{\h\setminus P}(v_1)$ and alter the new path with Permutation~$3$, for defining hyperedge $h_y$ and non-defining hyperedge $h_1$ then $v_3$ is a terminal vertex and we are done. Hence we may assume all $P$ non-defining hyperedges and $h_1$  are not incident with both  $v_2$ and $v_{y+1}$ vertices. Recall that since $N_{\h\setminus P}(v_1)=N_{\h_1}[S_1]$ we have  hyperedges $h_1,h_2,\dots, h_x\subseteq N_{\h_1}[v_1]$. The degree of $v_1$ can be upper bounded by the number of non-defining hyperedges which is at most $2^t-2^{t-2}$, even more, the bound includes hyperedge $h_1$. Since $t=3$,  the defining hyperedges incident with $v_1$ is at most three including $h_1$. Hence we have a contradiction
$2^3<d(v_1)\leq (3-1) +  (2^{3}-  2^{3-2})=2^3.$
\end{proof}

From the last claim we have $T_1\geq t+1$, hence since $v_1$ has the maximum degree in $T_1$, we have $t\geq \floor{\frac{n-1}{2}}$ and $d(v_1)> 2^{\floor{\frac{n-1}{2}}}+\mathbbm{1}_{2|n}$ by Condition~\ref{Condition:PosaNON-uniform_1} and~\ref{Condition:PosaNON-uniform_2} of Theorem~\ref{Theorem_Posa_non_Uniform}. 
Note that we can define the set $N_{\h_n}[S_n]$ for the vertex $v_n$ as for $v_1$.  
We have $N_{\h_1}[S_1]^-\cap N_{\h_n}[S_n]=\emptyset$ otherwise we have a Hamiltonian cycle. 
Recall that we have $\abs{N_{\h_1}[S_1]^-}\geq \floor{\frac{n+1}{2}}$, and we have the same inequality for $S_n$, namely $\abs{N_{\h_n}[S_n]}\geq \floor{\frac{n+1}{2}}$ by symmetry of a Berge path.

 

Let $T_1'$ be $N_{\h_1}[S_1]$  if $\abs{N_{\h_1}[S_1]}>\floor{\frac{n+1}{2}}$. If $\abs{N_{\h_1}[S_1]}=\floor{\frac{n+1}{2}}$,  then there is a vertex $u$, such that $u\in T_1$ and $u \notin N_{\h_1}(v_1)^-$ by Claim~\ref{Claim:T_1_size_NON-uniform}. We denote the set $N_{\h_1}[S_1]\cup \{u\}$ by $T_1'$ . Note that $u$ cannot be a terminal vertex after altering $P$ using hyperedges which are not incident with a vertex in $S_n$, otherwise, there is a Hamiltonian Berge cycle. Hence we have $u\notin N_{\h_1}[S_1]^-\cup N_{\h_n}[S_n]$. 
Let $T_n'$ be $N_{\h_n}[S_n]$  if $\abs{N_{\h_n}[S_n]}>\floor{\frac{n+1}{2}}$. If $\abs{N_{\h_n}[S_n]}=\floor{\frac{n+1}{2}}$,  then there is a vertex  $u'$, such that $u'\in T_n$ and $u' \notin N_{\h_n}(v_n)^+$ by Claim~\ref{Claim:T_1_size_NON-uniform}. We denote the set $N_{\h_n}[S_n]\cup \{u'\}$ by $T_n'$. Note that $u'$ cannot be a terminal vertex after altering $P$ using hyperedges that are not incident with a vertex in $S_1$, otherwise, there is a Hamiltonian Berge cycle. 
 Hence we have $\{u'\}^-\nsubseteq N_{\h_1}[S_1]^-\cup N_{\h_n}[S_n]$. 
 Even more, if $n$ is even and both of the vertices $u$ and $u'$ exist then we have $\{u\}\neq\{u'\}^-$.  
 Since we have $N_{\h_1}[S_1]^-\cup N_{\h_n}[S_n]\subseteq \{v_1,v_2,\dots,v_n\} $,  $N_{\h_1}[S_1]^-\cap N_{\h_n}[S_n]=\emptyset$. We have a contradiction 
\[n+1= 2\floor{\frac{n+1}{2}}-1+1+\mathbbm{1}_{2|n}\leq \abs{(T_1')^-\cup T_n')} \leq \abs{\{v_1,v_2,\dots,v_{n}\}}=n. \]

\end{proof}

\section{Concluding remarks}

The next natural step in this direction would be finding the necessary and sufficient conditions for a degree sequence to be $r$-Hamiltonian. In particular,  generalization of Chvátal's Theorem~\ref{thm:chvatal}. We believe that Chvátal's condition would be only sufficient but not necessary for hypergraphs. We pose the following Conjecture. 
\begin{conjecture}
An integer sequence $(d_1,d_2,\ldots,d_n)$ such that $d_1\leq d_2\leq \dots\leq d_n$, $n>2r$ and $r\geq 3$ is  $r$-Hamiltonian if the following conditions hold
\begin{equation}
   d_i>i \mbox{ for } 1 \leq i < r,
\end{equation}

and

\begin{equation}
 \mbox{If } d_i\leq  \binom{i}{r-1}  \mbox{ then } d_{n-i}> \binom{n-i-1}{r-1} \mbox{ for } r \leq i \leq \floor{\frac{n-1}{2}},
\end{equation}

\begin{equation}
 \mbox{If } d_{\frac{n-2}{2}} \leq \binom{\frac{n-2}{2}}{r-1} +1  \mbox{ then } d_{\frac{n+2}{2}}> \binom{\frac{n}{2}-2}{r-1}+(\frac{n}{2}+1)\binom{\frac{n}{2}-2}{r-2}   \mbox{ if $n$ is even.} 
\end{equation}

\end{conjecture}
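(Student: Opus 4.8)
The plan is to argue by contradiction within the rotation-extension framework already set up above. Suppose $\h$ is an $r$-uniform $n$-vertex hypergraph whose degree sequence dominates $(d_1,\dots,d_n)$ pointwise but which has no Berge Hamiltonian cycle. Pass to a maximal such $\h$, so that its longest Berge path $P=v_1,h_1,\dots,h_{n-1},v_n$ is Hamiltonian, and among all Hamiltonian Berge paths choose $P$ to maximise $d(v_1)+d(v_n)$ (with $d(v_1)\le d(v_n)$) and then $\abs{N_{\h\setminus P}(v_1)}+\abs{N_{\h\setminus P}(v_n)}$. The whole argument then reduces to one quantitative fact: the set $T_1$ of vertices reachable as the ``free'' endpoint of a Hamiltonian Berge path ending at $v_n$ is large relative to $d(v_1)$.

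First I would pin down the size of $T_1$. Applying Permutation~$1$ to each defining hyperedge at $v_1$ and Permutation~$2$ to each non-defining hyperedge at $v_1$, together with Claim~\ref{Claim:basic}, every hyperedge incident with $v_1$ contributes endpoints to $T_1$. Here the $r$-uniformity is essential: the number of non-defining hyperedges at $v_1$ is at most $\binom{\abs{N_{\h\setminus P}(v_1)}}{r-1}$, so with $k$ the number of defining hyperedges at $v_1$ one has $d(v_1)\le\binom{\abs{N_{\h\setminus P}(v_1)}}{r-1}+k$. Fixing $t$ by $\binom{t}{r-1}<d(v_1)\le\binom{t+1}{r-1}$, this forces $\abs{N_{\h\setminus P}(v_1)}\ge t$ (the base case $\abs{T_1}\ge r+1$ is a short case analysis on how many non-defining hyperedges meet $v_1$, using Permutation~$3$ to push consecutive vertices of $P$ into $T_1$). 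The delicate step is upgrading $\abs{T_1}\ge t$ to the strict bound $\abs{T_1}\ge t+1$: when $\abs{N_{\h_1}[S_1]}=t+1$ exactly, I would locate a defining hyperedge $h_y$ at $v_1$ with $y\ge x$ and use Permutation~$3$, applied to $h_y$ and a non-defining hyperedge through $v_{x-1}$ and $v_{y+1}$, to certify that either $v_x\in T_1$ or some $v_y$ with $v_{y+1}\notin N_{\h_1}[S_1]$ lies in $T_1$, producing in each case a new endpoint.

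With $\abs{T_1}\ge t+1$ and the symmetric statement at $v_n$, the degree hypotheses close the argument. Since $v_1$ has the maximum degree among vertices of $T_1$, the monotonicity built into Conditions~\ref{Condition:Posa2} and~\ref{Condition:Posa3} forces $t\ge\floor{\frac{n-1}{2}}$, hence $d(v_n)\ge d(v_1)>\binom{\floor{\frac{n-1}{2}}}{r-1}+\mathbbm{1}_{2|n}$, so $\abs{N_{\h\setminus P}(v_1)}$ and $\abs{N_{\h\setminus P}(v_n)}$ are each at least $\floor{\frac{n+1}{2}}$. The final count is a disjointness argument: any vertex in $N_{\h_1}[S_1]^-\cap N_{\h_n}[S_n]$ (where $S_n$ and $\h_n$ are defined at $v_n$ by symmetry) would splice the two rotated paths through a crossing pair of non-defining hyperedges into a Berge Hamiltonian cycle, so these shifted neighbourhoods are disjoint subsets of $\{v_1,\dots,v_n\}$; when $n$ is even the two extra endpoints $u,u'$ supplied by the strict bound add two further, non-consecutive vertices, giving $\abs{\{u,u'\}\cup N_{\h_1}[S_1]^-\cup N_{\h_n}[S_n]}\ge n+1>n$, a contradiction.

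The main obstacle, and where the most care is needed, is precisely the $r$-uniform counting that converts degrees into endpoint counts. Unlike the graph case a vertex sits in many hyperedges, so the bound $d(v_1)\le\binom{\abs{N_{\h\setminus P}(v_1)}}{r-1}+k$ and its sharpening to $\abs{T_1}\ge t+1$ are genuinely two-sided estimates that must be tracked through all three permutations, and the boundary regimes ($n$ even, $k$ close to $\floor{\frac{n-1}{2}}$, and especially $r=3$ where $\binom{t}{r-1}=\binom{t}{2}$ leaves no slack) each require separate handling. For $r=3$ in particular I would first prove that every defining hyperedge meets at most one endpoint of $P$, so that the augmented sets $N_{\h_1}[S_1]\cup\{v_i:v_1\in h_i\}$ and $N_{\h_n}[S_n]\cup\{v_i:v_n\in h_i\}$ replace the plain shifted neighbourhoods in the concluding count, recovering the same contradiction.
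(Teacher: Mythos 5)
You are attempting to prove a statement that the paper itself does not prove: this is the concluding \emph{conjecture} (the Chvátal-type analogue), explicitly left open, and your argument is essentially a replay of the paper's proof of the Pósa-type Theorem~\ref{Theorem_Posa_r_Uniform}. That proof does not transfer, and the place where yours breaks is exactly the step you state as ``since $v_1$ has the maximum degree among vertices of $T_1$, the monotonicity built into the conditions forces $t\geq\floor{\frac{n-1}{2}}$.'' That inference needs the \emph{unconditional} Pósa hypothesis $d_i>\binom{i}{r-1}$ for all $i\leq\floor{\frac{n-1}{2}}$: from $\abs{T_1}\geq t+1$ and $v_1$ having maximal degree in $T_1$ one only learns that at least $t+1$ vertices have degree at most $d(v_1)\leq\binom{t+1}{r-1}$, i.e.\ $d_{t+1}\leq\binom{t+1}{r-1}$. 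Under the conjecture's hypothesis this is \emph{permitted}; it merely triggers the consequent $d_{n-t-1}>\binom{n-t-2}{r-1}$, telling you that the top $t+1$ vertices have very large degree. Your proposal never uses this branch, so it proves nothing beyond Theorem~\ref{Theorem_Posa_r_Uniform}. In Chvátal's graph proof this is precisely where a genuinely new argument enters (one must find a high-degree vertex non-adjacent to an endpoint and contradict the extremal choice of the path); constructing the Berge-hypergraph analogue of that step is the open difficulty, and the paper's author states they cannot do it.

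Two further concrete symptoms of the same gap: you cite Conditions~\ref{Condition:Posa2} and~\ref{Condition:Posa3}, which belong to Theorem~\ref{Theorem_Posa_r_Uniform}, not to the conjecture — the conjecture's conditions are implications and assert no lower bound on $d_i$ for small $i$ beyond $d_i>i$ for $i<r$. And the even-$n$ condition of the conjecture has a different, much larger threshold $d_{\frac{n+2}{2}}>\binom{\frac{n}{2}-2}{r-1}+(\frac{n}{2}+1)\binom{\frac{n}{2}-2}{r-2}$ (read off from Example~\ref{Example:Degree_Sequence_r_Uniform_n/2}), which your counting argument with the two extra endpoints $u,u'$ never invokes; the disjointness count $\abs{\{u,u'\}\cup N_{\h_1}[S_1]^-\cup N_{\h_n}[S_n]}\geq n+1$ is only available once $t\geq\floor{\frac{n-1}{2}}$ has been secured, which is the very step that fails. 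So the proposal is not a proof of the conjecture; at best it re-derives the already-proved Pósa-type theorem.
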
 

At the same time, we find it challenging to pose any conjecture with the necessary and sufficient conditions. 

\section{Acknowledgements}

I would like to thank Ervin Győri for the helpful discussions and their comments on the earlier stages of this work.

We extend our sincerest gratitude to the reviewer for their meticulous reading and invaluable suggestions, which have significantly enhanced the quality of the presentation. We deeply appreciate the time and effort dedicated to reviewing our manuscript, which has undoubtedly contributed to improving this work.

The research is supported by the National Research, Development, and Innovation Office -- NKFIH, grant K132696. 

\bibliography{Proposal.bib}

\begin{thebibliography}{10}

\bibitem{berge1973graphs}
Claude Berge.
\newblock {\em Graphs and hypergraphs}.
\newblock North-Holland Pub. Co., 1973.

\bibitem{bermond1978hypergraphes}
Jean-Claude Bermond, Anne Germa, Marie-Claude Heydemann, and Dominique Sotteau.
\newblock Hypergraphes hamiltoniens.
\newblock In {\em Probl{\`e}mes combinatoires et th{\'e}orie des graphes,
  Colloque International CNRS, 260}, volume 260, pages 39--43. CNRS, 1978.

\bibitem{bondy1969properties}
JA~Bondy.
\newblock Properties of graphs with constraints on degrees.
\newblock {\em Studia Sci. Math. Hungar}, 4:473--475, 1969.

\bibitem{bowtell2022degree}
Candida Bowtell and Joseph Hyde.
\newblock A degree sequence strengthening of the vertex degree threshold for a
  perfect matching in 3-uniform hypergraphs.
\newblock {\em SIAM Journal on Discrete Mathematics}, 36(2):1038--1063, 2022.

\bibitem{chvatal1972hamilton}
V{\'a}clav Chv{\'a}tal.
\newblock On hamilton's ideals.
\newblock {\em Journal of Combinatorial Theory, Series B}, 12(2):163--168,
  1972.

\bibitem{coulson2020rainbow}
Matthew Coulson and Guillem Perarnau.
\newblock A rainbow dirac's theorem.
\newblock {\em SIAM Journal on Discrete Mathematics}, 34(3):1670--1692, 2020.

\bibitem{dirac1952some}
Gabriel~Andrew Dirac.
\newblock Some theorems on abstract graphs.
\newblock {\em Proceedings of the London Mathematical Society}, 3(1):69--81,
  1952.

\bibitem{ergemlidze2020avoiding}
Beka Ergemlidze, Ervin Gy{\H{o}}ri, Abhishek Methuku, Nika Salia, Casey
  Tompkins, and Oscar Zamora.
\newblock Avoiding long berge cycles: the missing cases k= r+ 1 and k= r+ 2.
\newblock {\em Combinatorics, Probability and Computing}, 29(3):423--435, 2020.

\bibitem{furedi2019avoiding}
Zolt{\'a}n F{\"u}redi, Alexandr Kostochka, and Ruth Luo.
\newblock Avoiding long berge cycles.
\newblock {\em Journal of Combinatorial Theory, Series B}, 137:55--64, 2019.

\bibitem{furedi2020berge}
Zoltan Furedi, Alexandr Kostochka, and Ruth Luo.
\newblock Berge cycles in non-uniform hypergraphs.
\newblock {\em Electronic Journal of combinatorics}, 27(3):P3.9, 2020.

\bibitem{gerbner2024stability}
D{\'a}niel Gerbner, D{\'a}niel~T Nagy, Bal{\'a}zs Patk{\'o}s, Nika Salia, and
  M{\'a}t{\'e} Vizer.
\newblock Stability of extremal connected hypergraphs avoiding berge-paths.
\newblock {\em European Journal of Combinatorics}, 118:103930, 2024.

\bibitem{gould2003advances}
Ronald~J Gould.
\newblock Advances on the hamiltonian problem--a survey.
\newblock {\em Graphs and Combinatorics}, 19(1):7--52, 2003.

\bibitem{gould2014recent}
Ronald~J Gould.
\newblock Recent advances on the hamiltonian problem: Survey iii.
\newblock {\em Graphs and Combinatorics}, 30(1):1--46, 2014.

\bibitem{gyHori2021structure}
Ervin Gy{\H{o}}ri, Nathan Lemons, Nika Salia, and Oscar Zamora.
\newblock The structure of hypergraphs without long berge cycles.
\newblock {\em Journal of Combinatorial Theory, Series B}, 148:239--250, 2021.

\bibitem{gyHori2019connected}
Ervin Győri, Nika Salia, and Oscar Zamora.
\newblock Connected hypergraphs without long berge-paths.
\newblock {\em European Journal of Combinatorics}, 96:103353, 2021.

\bibitem{han2009perfect}
Hip H{\`a}n, Yury Person, and Mathias Schacht.
\newblock On perfect matchings in uniform hypergraphs with large minimum vertex
  degree.
\newblock {\em SIAM Journal on Discrete Mathematics}, 23(2):732--748, 2009.

\bibitem{jackson1981cycles}
Bill Jackson.
\newblock Cycles in bipartite graphs.
\newblock {\em Journal of Combinatorial Theory, Series B}, 30(3):332--342,
  1981.

\bibitem{jackson1985long}
Bill Jackson.
\newblock Long cycles in bipartite graphs.
\newblock {\em Journal of Combinatorial Theory, Series B}, 38(2):118--131,
  1985.

\bibitem{jiang2018cycles}
Tao Jiang and Jie Ma.
\newblock Cycles of given lengths in hypergraphs.
\newblock {\em Journal of Combinatorial Theory, Series B}, 133:54--77, 2018.

\bibitem{katona1999hamiltonian}
Gyula~Y Katona and Hal~A Kierstead.
\newblock Hamiltonian chains in hypergraphs.
\newblock {\em Journal of Graph Theory}, 30(3):205--212, 1999.

\bibitem{kostochka2020conditions}
Alexandr Kostochka, Mikhail Lavrov, Ruth Luo, and Dara Zirlin.
\newblock Conditions for a bigraph to be super-cyclic.
\newblock {\em Electronic Journal of combinatorics}, 28(1):P1.2, 2021.

\bibitem{kostochka2022longest}
Alexandr Kostochka, Mikhail Lavrov, Ruth Luo, and Dara Zirlin.
\newblock Longest cycles in 3-connected hypergraphs and bipartite graphs.
\newblock {\em Journal of Graph Theory}, 99(4):758--782, 2022.

\bibitem{kostochka2020r}
Alexandr Kostochka and Ruth Luo.
\newblock On r-uniform hypergraphs with circumference less than~r.
\newblock {\em Discrete Applied Mathematics}, 276:69--91, 2020.

\bibitem{kostochka2021dirac}
Alexandr Kostochka, Ruth Luo, and Grace McCourt.
\newblock Dirac's theorem for hamiltonian berge cycles in uniform hypergraphs.
\newblock {\em arXiv preprint arXiv:2109.12637}, 2021.

\bibitem{kostochka2020super}
Alexandr Kostochka, Ruth Luo, and Dara Zirlin.
\newblock Super-pancyclic hypergraphs and bipartite graphs.
\newblock {\em Journal of Combinatorial Theory, Series B}, 145:450--465, 2020.

\bibitem{posa1962theorem}
Lajos Posa.
\newblock A theorem concerning hamilton lines.
\newblock {\em Magyar Tud. Akad. Mat. Kutat{\'o} Int. K{\"o}zl}, 7:225--226,
  1962.

\bibitem{rodl2008approximate}
Vojt{\v{e}}ch R{\"o}dl, Endre Szemer{\'e}di, and Andrzej Ruci{\'n}ski.
\newblock An approximate dirac-type theorem for k-uniform hypergraphs.
\newblock {\em Combinatorica}, 28(2):229--260, 2008.

\bibitem{schulke2023pair}
Bjarne Sch{\"u}lke.
\newblock A pair degree condition for hamiltonian cycles in 3-uniform
  hypergraphs.
\newblock {\em Combinatorics, Probability and Computing}, 32(5):762--781, 2023.

\bibitem{tutte1956theorem}
William~T Tutte.
\newblock A theorem on planar graphs.
\newblock {\em Transactions of the American Mathematical Society},
  82(1):99--116, 1956.

\end{thebibliography}
\end{document}